\theoremstyle{plain}
\newtheorem{thm}{Theorem}
\newtheorem{theorem}{Theorem}[section]
\newtheorem{proposition}[theorem]{Proposition}
\newtheorem{lemma}[theorem]{Lemma}
\newtheorem*{conj*}{Conjecture}
\theoremstyle{definition}
\newtheorem{example}[theorem]{Example}
\newtheorem{remark}[theorem]{Remark}
\numberwithin{equation}{section}
\newcommand{\Z}{{\mathbb Z}}
\newcommand{\Q}{{\mathbb Q}}
\newcommand{\C}{{\mathbb C}} 
\newcommand{\R}{{\mathbb R}}
\newcommand{\comment}[1]{}
\def\im{\operatorname{Im}} \def\ker{\operatorname{Ker}} 
\def\Symb{\operatorname{Symb}}
\def\PP{\mathbb{P}} \def\Q{\mathbb{Q}}\def\R{\mathbb{R}}\def\Z{\mathbb{Z}}\def\C{\mathbb{C}}
\def\le{\leqslant} \def\ge{\geqslant}
\def\tr{\operatorname{Tr}}
\def\ind{\operatorname{Ind}}\def\cor{\operatorname{cor}}\def\res{\operatorname{res}}
\def\ch{\operatorname{char}}\def\rk{\operatorname{rank}}
\def\SL{\mathrm{SL}} \def\PSL{\mathrm{PSL}}\def\GL{\mathrm{GL}}
\def\I{\mathcal{I}}  
  \def\M{\mathcal{M}}\def\RR{\mathcal{R}}
\def\H{\mathcal{H}}
\def\e{\varepsilon} \def\DD{\Delta} \def\G{\Gamma}
\def\SS{\Sigma}
\def\dd{\delta} \def\ss{\sigma}
\def\a{\alpha}\def\b{\beta}\def\g{\gamma}
\def\+{\,+\,}  \def\={\;=\;} 
\def\sm#1#2#3#4{\left(\begin{smallmatrix}#1&#2 \\ #3 & #4 \end{smallmatrix}\right)}
\def\be{\begin{equation}}  \def\ee{\end{equation}}
\def\ov#1{\overline{#1}}
\newcommand{\wT}{\widetilde{T}}
\renewcommand{\wr}{\widehat{\rho}}
\newcommand{\wW}{\widehat{W}}
\def\new{\mathrm{new}}
\def\bsh{\backslash}
\def\vp{\varphi}
\def\rar{\rightarrow}\def\FF{\mathbb{F}}
\def\la{\langle} \def\ra{\rangle}
\def\den{\mathrm{Den}\;}
\def\lrar{\longrightarrow}
\title[A generalization of Ramanujan's congruence]
{A generalization of Ramanujan's congruence\\ to modular forms of prime level}
\author{Radu Gaba and Alexandru A. Popa}
\thanks{The authors were supported by CNCSIS grant TE-2014-4-2077.}
\address{Institute of Mathematics ``Simion Stoilow" of the Romanian Academy,
P.O. Box 1-764, RO-014700 Bucharest, Romania}
\address{E-mail: radu.gaba@imar.ro, alexandru.popa@imar.ro }
\begin{document}

\begin{abstract} We prove congruences between cuspidal newforms and
Eisenstein series of prime level, which generalize Ramanujan's congruence. 
Such congruences were recently found by Billerey and Menares, 
and we refine them by specifying the Atkin-Lehner eigenvalue of the newform 
involved. We show that similar refinements hold for the level raising congruences 
between cuspidal newforms of different levels, due to Ribet and Diamond. 
The proof relies on studying the new subspace and the Eisenstein subspace 
of the space of period polynomials for the congruence subgroup~$\G_0(N)$, 
and on a version of Ihara's lemma. 
\end{abstract}
\maketitle

\section{Introduction}

Let $E_k$ be the Eisenstein series of even weight $k\ge 4$ for the full
modular group, normalized so that its Fourier expansion is
 \[ E_k(z)=-\frac{B_k}{2k}+\sum_{n\ge 1} \sigma_{k-1}(n) q^n, \]
where  $B_k$ is the Bernoulli number, $\sigma_{a}(n)=\sum_{d|n} d^a$, 
and $q=e^{2\pi i z}$. Let $\I$ be a prime ideal dividing 
the numerator of $\frac{B_k}{2k}$, in the number field generated by 
the eigenvalues of Hecke eigenforms of weight $k$.  
Then there exists such a cuspidal Hecke eigenform $f$ such that 
 \be \label{e0} f\equiv E_k \pmod{\I};\ee
for $k=12$ this is the well-known Ramanujan congruence modulo 691, 
while for higher weights it was proved in \cite{H,DG}. This and later 
congruences mean that the difference between the 
Fourier coefficients at the cusp $\infty$ of the two sides belong to~$\I$
(after clearing the denominator of the constant term), and we always normalize
Hecke eigenforms to have the coefficient of~$q$ equal to 1. 

This congruence was recently generalized to newforms $f$ of prime level by 
Billerey-Menares~\cite{BM} and by Dummigan-Fretwell~\cite{DF}, for 
Fourier coefficients of index coprime to the level. In this paper 
we refine these results, by determining also the Atkin-Lehner eigenvalue 
of the newform involved, thus obtaining congruences for all coefficients.
A similar congruence between cuspidal newforms of different levels is the 
``level raising theorem'' of Ribet~\cite{Ri} and Diamond~\cite{Di}, and 
we show that it admits a similar refinement. Before stating the results, 
we introduce the common setting and explain the heuristic behind them. 

Let $S_k(N)$, $M_k(N)$ be the space of cusp forms, respectively modular forms
of even weight $k\ge 2$ and trivial Nebentypus for the congruence subgroup $\G_0(N)$. 
We let $N=Mp$ with $p\nmid M$ a prime, and consider a modular form  
$g\in M_k(M)$. Fixing $\e\in\{\pm 1\}$,  
we define 
\be\label{1.2}
g^{(\e)}_p:=g|(1+\e W_p),\quad \text{ namely } 
g^{(\e)}_p(z):=g(z)+\e p^{k/2}g(pz),
\ee
which is a form of level $N$ with eigenvalue $\e$ under the Atkin-Lehner 
involution $W_p$.  Since~$g$ has level $M$, it follows that
$\tr^N_M (g|W_p) = p^{-k/2+1} g|T_p$,
where $\tr^N_M:M_k(N)\rar M_k(M)$ is the trace map (the definition is recalled in Section~\ref{s2.3}). 
Assuming further that $g$ is an eigenform of the Hecke operator $T_p$ with 
eigenvalue $\lambda_p$ we have
\be\label{2.12}
     \tr^N_M g^{(\e)}_p =\big(1+p+\e p^{-k/2+1} \lambda_p\big) \cdot  g\;, 
\ee
using that $[\G_0(M):\G_0(N)]=1+p$. The cuspforms $f\in S_k(N)$ which 
are new at $p$ can be characterized by the condition $\tr^N_M f=\tr^N_M f|W_N =0$, as recalled in
Section~\ref{s2.3}, where we also recall the definitions. We conclude from~\eqref{2.12} that if $g$ is a newform of level $M$,
then $g^{(\e)}_p$ is new at~$p$ when reduced modulo prime ideals dividing the term in parentheses,
and heuristically we expect that it is congruent modulo such ideals to a Hecke eigenform in 
$S_k(N)$, which is new at~$p$ and has eigenvalue~$\e$ under~$W_p$. 
The next two theorems confirm this heuristic.  

When $M=1$ and $g=E_k$, denote by $E_{k,p}^{(\e)}$ the form~$g_p^{(\e)}$
in~\eqref{1.2}.  The factor in~\eqref{2.12} splits: 
\[ 1+p+\e p^{-k/2+1}\ss_{k-1}(p) =(p^{-k/2+1}+\e)(p^{k/2}+\e), \]
and since $W_p$ interchanges 
the cusps $\infty$ and 0 of $\G_0(p)$, the constant terms of 
$E_{k,p}^{(\e)}$ at both cusps are $\frac{B_k}{2k}(p^{k/2}+\e)$ 
(up to a sign and powers of $p$). Therefore candidates for congruence primes 
between $E_{k,p}^{(\e)}$ and newforms are prime ideals which 
divide its constant terms and the product above. 

We denote by  $S_k^{(\e)}(N)$ the subspace of $S_k(N)$ consisting of 
eigenforms for~$W_p$ with eigenvalue~$\e$. We write $a|q$, $a\nmid q$ if the 
integer $a$ divides, respectively does not divide, the numerator of the rational
number $q$. 

\begin{thm}\label{T1} Let $k\ge 4$ be even, $p$ a prime, and~$\e\in\{\pm 1\}$. 
Let $\ell$ be a prime with $\ell>k+1$, and 
assume that the following conditions are satisfied: 
\be \label{1.6}
\ell\big|(p^{k/2}+\e)(p^{k/2-1}+\e) \text{\ \  and \ \
   $\ell\big| \frac{B_k}{k}(p^{k/2}+\e).$}
\ee 
If $\ell\nmid (p^{k/2}+\e)$, assume also that $\ell\nmid B_nB_{k-n}(p^{n-1}-1)$ for some even $n$, $0<n<k$.
Then there exists a prime ideal  $\I$ of residue characteristic 
$\ell$, in the ring of integers
generated by Hecke eigenvalues of newforms in $S_k^{(\e)}(p)$, and a newform 
$f\in S_k^{(\e)}(p)$ such that
\be \label{e1} 
f \equiv E_{k,p}^{(\e)} \pmod{\I}.
\ee
\end{thm}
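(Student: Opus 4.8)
The plan is to transfer the congruence problem from spaces of modular forms to the dual situation of period polynomials, where the new subspace and Eisenstein subspace can be controlled explicitly, and then combine this with a version of Ihara's lemma to deduce the existence of the newform. First I would recall, from the sections on period polynomials for $\G_0(N)$, the Hecke-equivariant pairing between $S_k(N)\oplus\overline{S_k(N)}\oplus(\text{Eisenstein part})$ and the space $\W_k(N)$ of period polynomials, together with its decomposition into a new part and an old part, and the corresponding decomposition under the Atkin-Lehner involution $W_p$ into $\pm 1$ eigenspaces. The form $E_{k,p}^{(\e)}=E_k|(1+\e W_p)$ has an associated period polynomial; by \eqref{2.12} applied to $g=E_k$, the trace $\tr^p_1 E_{k,p}^{(\e)}$ equals $(p^{-k/2+1}+\e)(p^{k/2}+\e)\,E_k$ up to normalization, so the first divisibility in \eqref{1.6} says precisely that $E_{k,p}^{(\e)}$ is $p$-new modulo~$\ell$, i.e.\ its period polynomial lies in the new subspace after reduction mod~$\ell$.

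Next I would set up the congruence module: let $\mathbb{T}$ be the Hecke algebra (including $W_p$) acting on $S_k^{(\e)}(p)$, and let $\mathfrak{m}$ be the maximal ideal of residue characteristic $\ell$ generated by the Hecke eigenvalue system of $E_{k,p}^{(\e)}$. Producing a newform $f$ satisfying \eqref{e1} is equivalent to showing $\mathfrak{m}$ is in the support of the relevant Hecke module, equivalently that the Eisenstein ideal is nontrivial modulo~$\ell$ in $S_k^{(\e)}(p)$. The second condition in \eqref{1.6}, $\ell\mid \frac{B_k}{k}(p^{k/2}+\e)$, is exactly the statement that the constant term of $E_{k,p}^{(\e)}$ at the cusps vanishes modulo~$\ell$ (as computed in the discussion preceding the theorem), so $E_{k,p}^{(\e)}$ reduces modulo~$\ell$ to a cusp form mod~$\ell$; the hypotheses $\ell>k-2$, $\ell>3$, $\ell\ne k+1$ guarantee that $\ell$ is not ``small'' (so that the mod-$\ell$ reduction map on weight-$k$ forms is well-behaved and the Eisenstein series is nonzero mod~$\ell$, avoiding the exceptional characteristics where $E_k\equiv 0$ or torsion phenomena intervene). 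At this point a naive argument gives a cuspidal Hecke eigenform $f$ of level $p$ congruent to $E_{k,p}^{(\e)}$, but \emph{a priori} $f$ could be $p$-old, i.e.\ come from level~$1$; I would rule this out precisely using that $E_{k,p}^{(\e)}$ is $p$-new mod~$\ell$ together with Ihara's lemma, which controls the kernel of the degeneracy/trace maps modulo~$\ell$ and shows that a mod-$\ell$ eigensystem which is both congruent to a $p$-old form and lies in the $p$-new part must already be Eisenstein — contradiction unless the supersingular/Eisenstein primes are excluded, which is where the auxiliary condition $\ell\nmid B_nB_{k-n}(p^{n-1}-1)$ enters (it forces the mod-$\ell$ Galois representation, or equivalently the relevant period-polynomial pairing, to be non-degenerate on the new subspace).

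The main obstacle, and the step I would spend the most care on, is the control of the Atkin-Lehner sign: it is not enough to produce \emph{some} newform congruent to $E_{k,p}^{(\e)}$, one must produce one in $S_k^{(\e)}(p)$, i.e.\ with $W_p$-eigenvalue exactly~$\e$. The subtlety is that the reduction mod~$\ell$ of $E_{k,p}^{(\e)}$ is an honest cusp form, but the $+1$ and $-1$ eigenspaces of $W_p$ on $S_k(p)$ can share a mod-$\ell$ eigensystem (the congruence can jump across the sign). The resolution is to work on the new subspace of period polynomials, where $W_p$ acts and where the pairing is perfect after inverting a small explicit bad locus; since $E_{k,p}^{(\e)}$ is visibly a $W_p$-eigenvector with eigenvalue $\e$ (by its very definition \eqref{1.2} as $E_k|(1+\e W_p)$, because $W_p^2=1$ and $E_k|W_p = p^{k/2}E_k(p\,\cdot)$ recovers the same combination), its period polynomial lies in the $\e$-eigenspace of the new subspace; applying the congruence argument \emph{inside} that eigenspace then yields $f\in S_k^{(\e)}(p)$, provided the $\e$-part of the new subspace of period polynomials is nonzero mod~$\ell$ — and the divisibility $\ell\mid(p^{k/2}+\e)(p^{k/2-1}+\e)$ in \eqref{1.6}, which picks out exactly the eigenvalue $\e$ via the factorization $1+p+\e p^{-k/2+1}\sigma_{k-1}(p)=(p^{-k/2+1}+\e)(p^{k/2}+\e)$, is what makes this nonvanishing hold. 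I expect the remaining verifications (compatibility of the period-polynomial pairing with $W_p$ and with the Hecke action, the precise bad locus in Ihara's lemma, and the constant-term computation at both cusps) to be routine given the machinery developed in the earlier sections.
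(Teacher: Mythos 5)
Your high-level strategy is in the right spirit — use period polynomials and their new subspace, exploit that $E_{k,p}^{(\e)}$ is an Atkin–Lehner eigenvector by construction, and invoke a version of Ihara's lemma — but several key points are off in a way that would block the argument if you tried to carry it through.

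First, the role of Ihara's lemma is different from what you describe. You use it to argue that a mod-$\ell$ eigensystem cannot be simultaneously $p$-old and $p$-new unless Eisenstein, i.e.\ as a level-lowering obstruction. In the paper's proof, Ihara's lemma (really its two combinatorial ingredients) is used to prove a surjectivity statement, Theorem~\ref{T2}: the reduction map $W_w(N)^{p-\new}_{/R}\rar W_w(N)^{p-\new}_{/F}$ is surjective for $\ell>w$, $\ell\ne w+3$. Concretely, via Poincar\'e duality that surjectivity is shown equivalent to the injectivity of a sum-of-restrictions map $\a$ (Proposition~\ref{P5}), and the kernel of $\a$ is controlled by the Mayer–Vietoris sequence attached to Ihara's amalgam $\DD_M=\G(M)*_{\G(M)\cap\G(M)'}\G(M)'$ together with the vanishing of $H^1(\SL_2(\FF_\ell),V_w(F))$. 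Once you have this surjectivity, the Deligne–Serre lifting lemma lifts the mod-$\I$ eigensystem of the Eisenstein period polynomial (as an eigenvector for all $T_n$, $(n,p)=1$, \emph{and} for $W_p$) directly to a characteristic-zero eigensystem in $W_w(p)^{\new}_{/R}$, and Proposition~\ref{P8} translates this back into a newform in $M_k^{(\e)}(p)$, which must be cuspidal since there are no $p$-new Eisenstein series of prime level. Your congruence-module / Eisenstein-ideal framing is a legitimate alternative viewpoint but is not what the hypotheses of the theorem are actually calibrated for.

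Second, and more importantly, you miss the case split that is the heart of the argument, and consequently misread the role of the auxiliary hypothesis. The two divisibilities in~\eqref{1.6} are not redundant, and the proof treats them separately. If $\ell\mid p^{k/2}+\e$, one works with the \emph{even} period polynomial $P^+(E_{k,p}^{(\e)})$: by~\eqref{2.10} its trace to level~$1$ has the factor $1+\e p^{k/2}$, so it is $p$-new mod~$\ell$, and it is visibly nonzero mod~$\ell$ because $P^+(E_{k,p}^{(\e)})(I)=1-\e p X^w$ has constant coefficient~$1$. If instead $\ell\nmid p^{k/2}+\e$, then~\eqref{1.6} forces $\ell\mid p^{k/2-1}+\e$ and $\ell\mid B_k/k$, and one must use the \emph{odd} period polynomial. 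Here there is a genuine subtlety that your proposal does not engage with: for an Eisenstein series the odd period ``polynomial'' $\wr^-(E_{k,p}^{(\e)})$ lives a priori only in the extended space $\wW_{k-2}^-(p)$ and has the non-polynomial terms $X^{k-1}$, $X^{-1}$ with coefficients proportional to $B_k/(2k)$ (see~\eqref{2.8}); the hypothesis $\ell\mid B_k/k$ (together with $\ell\ne k-1$) is exactly what kills those terms mod~$\ell$ and makes $\wr^-\equiv\rho^-$ an honest element of $W_{k-2}^-(p)_{/\FF_\ell}$. Then $\ell\mid p^{k/2-1}+\e$ gives the $p$-newness via the second factor $(p^{-w/2}+\e)$ in~\eqref{2.10}. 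The auxiliary assumption $\ell\nmid B_nB_{k-n}(p^{n-1}-1)$ for some $n$ is used at this point to guarantee \emph{nonvanishing} of the odd period polynomial mod~$\ell$: by Proposition~\ref{p2.10} the coefficient of $X^{n-1}$ in $\wr^-(E_{k,p}^{(\e)})(I)$ is (up to harmless factors) $\frac{B_n}{n}\frac{B_{k-n}}{k-n}(1-p^{n-1})$. Your reading of this hypothesis as a non-degeneracy condition for a period-polynomial pairing or for the Galois representation is not what is going on; it is simply a nonvanishing certificate for a specific Eisenstein element in the mod-$\ell$ new subspace. Without identifying this case split and the extended-polynomial issue, the proof cannot be completed along the lines you sketch.
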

We prove the theorem in Section~\ref{s3}, together with the next theorem.
The theorem refines~\cite[Thm. 1]{BM}, where it is shown 
that a congruence as in~\eqref{e1} holds for coefficients coprime to $p$ if and 
only if $\ell|(p^k-1)(p^{k-2}-1)$ and $\ell$ divides the numerator of 
$\frac{B_k}{k}(p^{k}-1)$. 
The additional condition we impose if $\ell\nmid (p^{k/2}+\e)$ is an artifact of our method, 
but it is automatically satisfied for all $k\le 6\cdot 10^4$ (see Remark~\ref{r2}).  We 
also checked numerically that the theorem holds for $\ell=k\pm 1$ in numerous cases (see 
Example~\ref{ex1}), but our method does not apply for these values of $\ell$.

When the form $g$ is a cuspidal newform, we obtain instead the following 
refinement of Diamond's level raising theorem~\cite[Thm. 1]{Di}. We give this theorem
as an illustration of our method, as it requires little extra work. The statement is not the 
sharpest possible, since assumption~\eqref{1.5} below could probably be removed.\newpage

\begin{thm}\label{T3}Let $k\ge 4$ be even, 
let $N=Mp$ with $p$ prime, $p\nmid M$,  and $\e\in\{\pm 1\}$.
Let $g\in S_k(M)$ be a newform with eigenvalue $\lambda_p$ under $T_p$. 
Assume there is  a prime $\ell>k+1$, $\ell\nmid N$, and a prime ideal $\I$ above it
in the field generated by the eigenvalues of all Hecke eigenforms in $S_k^{(\e)}(N)$
such that 
\[ \lambda_p\equiv -\e p^{k/2-1}(p+1)\pmod \I . \]
Assume also that either 
\be\label{1.5}  \ell\nmid (p^{k/2-1}+\e) \cdot \den P^+(g),\quad
\text{ or }\  k\ge 6 \text{ and } \ell\nmid (p^{k/2-2}+\e) \cdot \den P^-(g) .
\ee 
Then there exists a Hecke eigenform
$f\in S_k^{(\e)}(N)$ which is new at~$p$ such that 
$$f\equiv g^{(\e)}_p \pmod \I.$$
\end{thm}
In condition~\eqref{1.5}, $P^+(g)$, respectively $P^-(g)$  is the even, 
respectively the odd period polynomial of $g$, 
normalized so that its principal part is $1$ modulo $X^2K_g[X]$, respectively $X$ 
modulo $X^3K_g[X]$, with $K_g$ the field generated by the Hecke eigenvalues of $g$.  
We denote by $\den P^{\pm}(g)$ the least common multiple of the norms of the 
denominators of the coefficients of $P^\pm(g)$ (which belong to $K_g$). 
These denominators tend to have few factors of residue characteristic 
$\ell> k$ (none if $f$ is of full level and weight $k$ such that $S_k(1)$ is one 
dimensional, cf. the tables in~\cite{Ma,Du}), so this condition is typically 
verified for a given~$g$ (see Example~\ref{ex3}). 
The theorem should hold for weight $k=2$ as well, without assumption~\eqref{1.5} but 
assuming that $\ell\nmid \vp(M)$,
which would refine the original result of Ribet~\cite{Ri}.

The proof of both theorems relies on the theory of period polynomials for 
congruence subgroups developed by Pa\c sol and the second author in~\cite{PP},
and the results we obtain along the way are of independent interest. 
In Section~\ref{S2} we define the new subspace of the space~$W_{w}(N)$
of period polynomials of degree~$w$  for~$\G_0(N)$, where $N\ge 1$ is arbitrary. By studying 
the action of the Atkin-Lehner involution on period polynomials,
we determine explicitly a basis for the ``Eisenstein subspace'' of~$W_{w}(N)$ 
consisting of Atkin-Lehner eigenvectors, when $N$ is square-free. 
We also need the larger space $\wW_{w}(N)$ of extended 
period polynomials introduced in~\cite{PP}, and we use it to prove an Eichler-Shimura 
isomorphism between the new subspaces of~$\wW_{k-2}(N)$ and 
of~$M_k(N)$
(see Propositions~\ref{P8} and~\ref{C2}). Compared to 
the better known theory of modular symbols, there are two new features: 
the new subspace is defined using a trace map from higher to lower levels, as in
Serre's characterization of newforms; and to associate even period polynomials
in $W_{k-2}(N)$ to Eisenstein series when~$N$ is square-free, we require 
the larger space~$\wW_{k-2}(N)$.

The upshot of the theory in Section~\ref{S2} is that the 
Eisenstein series in Theorem~\ref{T1} has an extended period polynomial 
$\wr\big(E_{k,p}^{(\e)}\big)$ whose even part belongs to the new subspace 
$W_w(p)_{/\FF_\ell}^\new$,
when reduced modulo primes $\ell|p^{k/2}+\e$, and similarly for the 
odd part modulo primes $\ell$ satisfying~\eqref{1.6} and  $\ell\nmid p^{k/2}+\e$. 
The even part is clearly nonzero modulo $\ell$, but for the 
nonvanishing of the odd part we require the extra condition in Theorem~\ref{T1}. 
Similarly, the polynomials $P^\pm\big(g_p^{(\e)}\big)$ in Theorem~\ref{T3} 
are new mod~$\I$ because of the congruence satisfied by $\lambda_p$, and at least one 
is nonzero mod~$\I$ by assumption~\eqref{1.5}. 
Since $\wr^\pm\big(E_{k,p}^{(\e)}\big)$ and  $P^\pm\big(g_p^{(\e)}\big)$ are Hecke and Atkin-Lehner
eigenvectors, the previous theorems follow from the Deligne-Serre 
lifting lemma~\cite{DeSe} (see Section~\ref{s3} for the details), once we establish the surjectivity 
of a reduction map on newspaces. 

This is the other main result, and to state it, we let $R$ be a discrete 
valuation ring with residue field~$F$ of characteristic~$\ell$.
Let  $W_{w}(N)^{p-\new}_{/R}$
be the space of polynomials new at $p|N$, defined over $R$ (see~\eqref{new} 
for the definition). 
\begin{thm} \label{T2} 
Let $w\ge 0$ be even, and let $N=pM$ with $p$ prime, $p\nmid M$.
Assume that $\ell> w+3$, $\ell\nmid N$, and if $w=0$ assume also that $\ell\nmid \vp(M)$.   
Then the reduction map
\[ W_{w}(N)^{p-\new}_{/R} \rar W_{w}(N)^{p-\new}_{/F} \]
is surjective (when $\ell=w+3>3$, its image has codimension 1). 
\end{thm}
For the proof, given in Section~\ref{S4}, we use the isomorphism between 
$W_{w}(N)_{/R}$ and the compactly supported cohomology
$H_c^1(\G_0(N), V_{w}(R))$, with $V_w(R)$ the module of polynomials  
of degree at most~$w$ with coefficients in~$R$. Using Poincar\'e duality,  
we show in Proposition~\ref{P5} that the surjectivity reduces to a version 
of Ihara's lemma~\cite[Lemma 3.2]{Ih}. Ihara's lemma, or rather the ingredients
in its proof, was first used to prove the level raising 
congruence mentioned above by Ribet (for weight two) and Diamond (for higher weights). 
We follow the argument in~\cite{Di}, modified to 
take into account that we work with the whole cohomology rather than just 
the parabolic part. Our work is close in spirit to Harder's program of 
proving congruences by studying Eisenstein cohomology classes~\cite{Ha}. 

The surjectivity of the map in Theorem~\ref{T2}, or lack thereof for $\ell= w+3$, 
can be traced to the vanishing of the finite cohomology group 
$H^1(\SL_2(\FF_\ell), V_w(F))$ for $\ell>w$, and its nonvanishing for~$\ell=w+3$. 
This was shown in~\cite{KPS}, but for completeness we give the proof in 
Proposition~\ref{P4}. 

We end the introduction with two remarks and a conjecture related to 
Theorem~\ref{T1}. 

Theorem~\ref{T1} is true in weight two as well, when it refines a congruence 
due to Mazur~\cite[Prop. 5.12]{Mz}. In this case, the Eisenstein subspace of $M_2(p)$ is 
one dimensional spanned by an Eisenstein series having Atkin-Lehner eigenvalue $-1$, and 
Theorem~\ref{T1} predicts that the newform in Mazur's congruence 
can be taken to have the same Atkin-Lehner eigenvalue. However there are technical 
difficulties in applying Theorem~\ref{T2} for $w=0$ in this case; 
the refined congruence is anyway proved by  Yoo~\cite[Thm. 1.3 (i)]{Yo}, who studies  
the case of weight two and square-free level in great detail, using different methods.  

Assuming a conjecture of Maeda, the Hecke eigenforms of full level form a 
single Galois orbit, so all of them satisfy~\eqref{e0} 
modulo conjugate ideals. Similarly, a generalization of 
Maeda's conjecture due to Tsaknias \cite{Ts} states that the newforms in 
$S_k(p)$ form two Galois orbits for sufficiently large $k$, 
the forms in each orbit sharing the same Atkin-Lehner eigenvalue. 
This would imply that all newforms in $S_k(p)$ satisfy congruence~\eqref{e1}, 
when primes~$\ell$ as in Theorem~\ref{T1} exist. 

A conjecture generalizing Ramanujan's congruence to newforms of 
square-free levels was proposed by Billerey and Menares in~\cite{BM}, 
and we end by stating a conjectural refinement that includes a generalization 
of Theorem~\ref{T3} as well. Let $N=M N'$ be 
square-free, $k\ge 4$, and let $g\in M_k(M)$ be a newform of level $M$. 
This includes the case of Eisenstein series, when we necessarily 
have $M=1$ and $g=E_k$. Let
$D(N')$ denote the set of positive divisors of $N'$, and let $\e:D(N')\rar\{\pm 1\}$ 
be a  multiplicative function, which we view as a system of Atkin-Lehner eigenvalues
for modular forms in $M_k(N)$. Define 
\[g_{N'}^{(\e)}=g| \prod_{p|N'} (1+\e(p) W_p), \text{ namely } 
g_{N'}^{(\e)}(z)=\sum_{d|N'}\e(d)d^{k/2} g(dz),
\]
which is a eigenform for $W_p$ for prime $p|N'$ with eigenvalue $\e(p)$, and 
it is new at primes dividing~$M$. Note that when $M=1$ and $g=E_k$, 
the constant term of $g_{N}^{(\e)}$ at all cusps is 
(up to signs and powers of $p|N$):
\be\label{1.7} \frac{B_k}{2k}\prod_{p|N}(p^{k/2}+\e(p))\;, \ee 
since the group of Atkin-Lehner involutions acts transitively on the cusps for square-free~$N$. 
Let $S_k^{(\e)}(N)$ be the subspace of $S_k(N)$ 
consisting of eigenforms for $W_p$ with eigenvalue $\e(p)$, for all $p|N'$. 

\begin{conj*} Assume $N=MN'$ square-free and $k\ge 4$, and  let $g\in M_k(M)$ 
be a newform of level $M$, with Hecke eigenvalues $\lambda_p$ for $p|N'$. 
Let $\e:D(N')\rar \{\pm 1\}$ be a system of Atkin-Lehner eigenvalues as above, 
and assume that for all $p|N'$ we have 
$$\lambda_p \equiv -\e(p)p^{k/2-1}(p+1) \pmod \I \;,$$
for $\I$ a prime ideal of residue characteristic $\ell$ in the field 
generated by the Hecke eigenvalues of all newforms in $S_k^{(\e)}(N)$, such that 
$\ell>k-2$, $\ell\nmid 6N$. If $M=1$ and $g=E_k$ also assume 
that $\ell$ divides the numerator of~\eqref{1.7}. 
Then there exists a newform $f\in S_k^{(\e)}(N)$ such that
\[f\equiv g_{N'}^{(\e)} \pmod \I. \]
\end{conj*}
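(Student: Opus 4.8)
The plan is to run the argument of Theorems~\ref{T1} and~\ref{T3}, with the single prime~$p$ replaced by the square-free integer~$N'$ and the Atkin--Lehner involution~$W_p$ replaced by the commutative $2$-group generated by~$\{W_p : p\mid N'\}$. Almost all of the structural input from Section~\ref{S2} is already available for arbitrary~$N$: the Eichler--Shimura isomorphism on new subspaces (Proposition~\ref{P8}, Corollary~\ref{C2}), and, for square-free~$N$, the explicit basis of Atkin--Lehner eigenvectors for the Eisenstein subspace of~$\wW_w(N)$. One therefore attaches to~$g_{N'}^{(\e)}$ an extended period polynomial $\wr\big(g_{N'}^{(\e)}\big)$ (or the even/odd period polynomials $P^\pm\big(g_{N'}^{(\e)}\big)$ when~$g$ is cuspidal), and the conjecture reduces, via the Deligne--Serre lifting lemma applied to the algebra generated by the Hecke operators of index prime to~$N$ together with all~$W_p$, $p\mid N'$, to three points: (i)~the relevant part of $\wr\big(g_{N'}^{(\e)}\big)$ becomes new at every~$p\mid N'$ modulo~$\I$; (ii)~it is nonzero modulo~$\I$; (iii)~the reduction map $W_w(N)^{\new}_{/R}\to W_w(N)^{\new}_{/F}$ between genuinely new subspaces is surjective.

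Point~(i) follows by iterating the trace identity~\eqref{2.12}: for each~$p\mid N'$ one has $\tr^N_{N/p}\,g_{N'}^{(\e)}=\big(1+p+\e(p)p^{-k/2+1}\lambda_p\big)\,g_{N'/p}^{(\e)}\equiv 0\pmod{\I}$ by hypothesis, and since the traces down to every level~$dp$, $d\mid M$, vanish mod~$\I$, the translation of Serre's characterization of newforms to period polynomials carried out in Section~\ref{S2} shows that $\wr\big(g_{N'}^{(\e)}\big)$ lands in the new subspace over~$\FF_\ell$. For~(ii), in the Eisenstein case $M=1$, $g=E_k$, the even period polynomial is visibly nonzero modulo~$\ell$ because its principal part is controlled by the numerator of~\eqref{1.7}, which is the running hypothesis---exactly as the even part was nonzero in Theorem~\ref{T1}---while in the cuspidal case one normalizes~$P^+\big(g_{N'}^{(\e)}\big)$ to have principal part~$1$. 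The nonvanishing of the \emph{odd} part is more delicate: by analogy with Theorems~\ref{T1} and~\ref{T3} one expects it to follow from an auxiliary divisibility (the $B_nB_{k-n}(p^{n-1}-1)$ condition of Theorem~\ref{T1}, resp.\ a~$\den P^{\pm}(g)$ condition as in~\eqref{1.5}) that is an artifact of the method; since the conjecture is stated without it, either removing such a condition or settling for the even-part congruence whenever~\eqref{1.7} supplies the needed divisibility is one of the genuine difficulties.

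The main obstacle is point~(iii), the square-free analogue of Theorem~\ref{T2}. Via the isomorphism $W_w(N)_{/R}\cong H_c^1(\G_0(N),V_w(R))$ and Poincar\'e duality (as in Proposition~\ref{P5}), surjectivity of the reduction map on the new part reduces to the injectivity modulo~$\ell$ of the combined degeneracy map
\[
\bigoplus_{d\mid N'} H^1(\G_0(M),V_w(F))\;\longrightarrow\;H^1(\G_0(N),V_w(F))
\]
on the part of the cohomology relevant to level raising---a version of Ihara's lemma for the several primes dividing~$N'$. I would attack this by induction on the number of prime divisors of~$N'$: peel off one prime~$p\mid N'$, apply the single-prime statement (Proposition~\ref{P5}) to the ``$p$-new'' part, and control the ``$p$-old'' part by the inductive hypothesis at level~$N/p$, the crux being to show that the vanishing of the finite cohomology $H^1(\SL_2(\FF_\ell),V_w(F))$ for $\ell>w$ (Proposition~\ref{P4}) decouples the congruence modules attached to the different primes. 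As in Section~\ref{S4} the difficulty is aggravated by working with the full compactly supported cohomology rather than its parabolic part, so the Eisenstein (boundary) classes must be followed carefully through the induction; one also expects the exceptional behaviour at $\ell=w+3=k+1$ to persist (and to be treatable, at least in the Eisenstein case, by the Dummigan--Fretwell-type argument~\cite{DF} discussed above), while the weight~$2$ case---excluded here since $k\ge 4$---would need the separate input already present in the work of Ribet~\cite{Ri} and Mazur~\cite{Mz}.
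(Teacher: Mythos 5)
The statement you are addressing is not a theorem in the paper; it appears in the introduction explicitly as a \emph{conjecture}, and the authors do not supply a proof. They only sketch two routes by which it \emph{might} follow: surjectivity of the reduction map on the new subspace of arithmetic modular forms \`a la Katz, or a square-free generalization of Theorem~\ref{T2}, i.e.\ surjectivity of $W_{w}(N)^{\new}_{/R}\to W_{w}(N)^{\new}_{/F}$ for $N$ square-free. They also remark that for cuspidal $g$ the mere existence of the newform $f$ --- without the Atkin--Lehner data --- is already a consequence of the Diamond--Taylor theorem on non-optimal levels. Your ``proof'' is therefore being measured against a target that the paper itself leaves open, and you should not expect to close the gap with the tools at hand.

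That said, your write-up is a reasonable plan of attack and, to your credit, it is honest about where it breaks down; the gaps you flag are the same ones the paper implicitly acknowledges. Concretely: (i)~Your reduction of ``newness mod $\I$'' to the vanishing of the trace factors $1+p+\e(p)p^{-k/2+1}\lambda_p$ for each $p\mid N'$ is correct and follows from iterating~\eqref{2.12}, as you say. (ii)~The nonvanishing mod $\ell$ is genuinely delicate. In the Eisenstein case the hypotheses of the conjecture do not force $\ell\mid p^{k/2}+\e(p)$ for \emph{every} $p\mid N'$; the trace condition only gives $\ell\mid(p^{k/2-1}+\e(p))(p^{k/2}+\e(p))$ for each $p$, together with $\ell$ dividing the numerator of the product in~\eqref{1.7}. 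So it is not in general true that the even period polynomial alone suffices, and you would need either the odd part or a finer bookkeeping of which factor of the two each prime hits; this is why Theorem~\ref{T1} carries its auxiliary $B_nB_{k-n}(p^{n-1}-1)$ condition, which the conjecture drops. You do not resolve this. (iii)~The multi-prime Ihara lemma you propose to prove by induction on $\omega(N')$ is the real obstruction. Your sketch peels off one prime, invokes Proposition~\ref{P5}, and appeals to the inductive hypothesis on the ``old at $p$'' part; but the $p$-new and $p$-old parts do not split cleanly in the integral (or $\FF_\ell$) cohomology --- there are congruence modules at the interface, and the several degeneracy maps interact. Making this induction work is precisely the point at which a proof would have to go beyond the paper, and nothing in Section~\ref{S4} supplies it. You also do not use the Diamond--Taylor input, which would at least dispatch the cuspidal case modulo the Atkin--Lehner refinement, so even a partial proof is further away than your sketch suggests.

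In short: you have correctly reproduced the heuristic that motivates the conjecture and identified the same missing ingredients the authors identify, but you have not supplied a proof, and the statement remains a conjecture both in the paper and after your attempt.
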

By~\eqref{2.12}, the conditions in the conjecture guarantee that $g_{N'}^{(\e)}$ 
is cuspidal and ``new'' when reduced modulo the ideal $\I$. The reductions mod~$\I$
make sense in the space $S_k(\G_0(N), \ov{\Z}_\ell)$ of arithmetic modular forms 
\`a la Katz~\cite{Ed}. Provided 
one had a definition of the ``new subspace'' $S_k(\G_0(N), \ov{\Z}_\ell)^\new$ 
involving trace maps, as for modular forms over $\C$, then the conjecture 
would follow immediately from the surjectivity of the reduction map 
\[S_k(\G_0(N), \ov{\Z}_\ell)^\new \rar S_k(\G_0(N), \ov{\FF}_\ell)^\new . 
\]
Part of the conjecture would also follow from a generalization of Theorem~\ref{T2}, stating that the 
reduction map $W_{w}(N)^{\new}_{/R} \rar W_{w}(N)^{\new}_{/F}$
is surjective for $N$ square-free. 

For cuspidal $g$, the existence of a newform $f$ as in the conjecture--with possibly non-trivial 
Nebentypus and minus the determination
of its Atkin-Lehner eigenvalues at primes $p|N'$--follows from a theorem of 
Diamond and Taylor on non-optimal levels for modular Galois 
representations~\cite{DT}.  That theorem was proved for arbitrary $M$ 
such that $(M,N')=1$ and $N'$ square-free, and we similarly 
expect the conjecture to hold in the cuspidal case under these assumptions. 
When $g$ is an Eisenstein newform of level $M>1$, a similar conjecture can be 
made, under extra assumptions due to the fact that the group of Atkin-Lehner involutions
no longer acts transitively on the cusps of~$\G_0(M)$.

For $k=2$ and $g$ an Eisenstein series, similar 
statements have been proved by Yoo~\cite{Yo} (who also determines the  
Atkin-Lehner eigenvalues of~$f$), and Martin~\cite{Mar}.

{\bf Acknowledgments.} We would like to thank Vicen\c tiu Pa\c sol 
for many useful conversations. We are grateful to Neil Dummigan and 
Dan Fretwell for bringing to our attention reference~\cite{BM}, after a first
version of this paper was released as a preprint. 
We thank a referee for a thorough reading of the paper and for pointing out an error in 
an earlier version of Lemma~\ref{L3}.

\section{Atkin-Lehner operators and newform theory for period polynomials}
\label{S2}

In~\S\ref{s2.1}, we briefly review the definition of period polynomials 
for $\G_0(N)$ and the action of Hecke operators on them. We study 
the action of Atkin-Lehner operators in more detail in~\S\ref{s2.2}, 
and in~\S\ref{s2.3} we define primitive (new) subspaces using trace maps. In~\S\ref{s2.40}
we introduce extended period polynomials, and in~\S\ref{s2.4} we use them 
 to determine an explicit basis consisting of Atkin-Lehner eigenvectors of the 
 ``Eisenstein subspace'' for square-free~$N$. 

\subsection{Period polynomials and Hecke operators} \label{s2.1}
We start  by recalling from~\cite{PP} basic facts about
period polynomials for the congruence subgroup $\G=\G_0(N)$ of
$\G_1=\SL_2(\Z)$. Let $S=\sm 0{-1}10$, $U=\sm 1{-1}10$ be generators of 
$\G_1$. We use the same notation for their images in $\PSL_2(\Z)$, which have
orders 2, 3, respectively. 

We fix a commutative base ring $R$ of characteristic
different from 2 and 3.
Let $V_w(R)$ be the space of polynomials of degree at most $w$ 
with coefficients in $R$, on which $\GL_2(\Z)$ acts on the right by 
\[ P|_{-w}\g(X)=P(\g X) (cX+d)^{w}, \text{  for $\g=\sm abcd\in \GL_2(\Z$).} \]
Let $V_{w}(N)_{/R}$ 
be the space of $|\G\bsh\G_1|$-tuples of polynomials,\footnote{Note that we have
two notations for $V_w(R)=V_w(1)_{/R}$, but for brevity we use the shorter notation.}
identified with maps $P:\G\bsh \G_1 \rightarrow V_w(R) $, on which $\G_1$ acts by 
$P|\g(A)=P(A\g^{-1})|_{-w} \g$ for a coset $A\in \G\bsh \G_1$, $\g \in\G_1$. 
Since $-1\in\G$ we assume $w\ge 0$ is even, so $-1$ acts trivially on $V_w(N)_{/R}$. 
The space of \emph{period polynomials} is defined by:
\[W_w(N)_{/R}=\{P\in V_w(N)_{/R} :  P|(1+S)=P|(1+U+U^2)=0\}. \]
The element $\dd=\sm {-1}001\in \GL_2(\Z)$ belongs to the normalizer of $\G$, 
so $W_w(N)_{/R}$ is preserved
by the involution $P\mapsto P|\dd$, where $P|\dd(A)=P(\dd A\dd)|_{-w}\dd$, and so it 
decomposes into eigenspaces $W_w^\pm(N)_{/R}$ for $\dd$ with eigenvalue~$\pm 1$. 
We call \emph{even} the polynomials in $W_w^+(N)$, and \emph{odd} those in $W_w^-(N)$. This is 
motivated by the fact that for $P\in W_w^+(N)$ the \emph{principal part} $P(I)$ is even, with~$I$ the 
coset of the identity, but not all components $P(A)$ are necessarily even.

\begin{remark} \label{r1}
The space $W_w(N)_{/R}$ is isomorphic to the space $\Symb_{\G_0(N)} V_w(R)$  
of modular symbols introduced by Ash and Stevens in~\cite{AS}, 
and by \cite[Prop.~4.2]{AS} we have a Hecke-equivariant isomorphism 
\be\label{3}
W_w(N)_{/R}\simeq H_c^1(\G_0(N),V_w(R)). 
\ee
The compactly supported cohomology group is that of the local system associated
to $V_w(R)$ on the modular surface $\G\bsh\H$, with $\H$ the upper half-plane. 

The module $V_w(N)_{/R}$ is simply the induced module $\ind_{\G}^{\G_1} V_w(R)$, 
so, via Shapiro's lemma, another way to interpret the isomorphism~\eqref{3} is:
 \[ W_w(N)_{/R}\simeq H_c^1(\G_1,V_w(N)_{/R}).\]  
Since $\G_1$ has only one cusp fixed by $T=US$ , the 
latter cohomology group can be identified with the set of $\G_1$-cocycles which 
are 0 on $T$ as in~\cite{H}, and the isomorphism takes a polynomial $P$ to the 
cocyle $\vp$ with $\vp(T)=0$, $\vp(T)=P$. See also~\cite[Sec. 2]{PP} and~\cite[Sec. 2.2]{P}. 

We will use the isomorphism~\eqref{3} in Section~\ref{S4}. We conclude that
the combinatorial description of $W_w(N)_{/R}$ that we use throughout 
Section~\ref{S2} gives us a way of studying ``Eisenstein classes'' 
in the compactly supported cohomology of the modular surface. 
\end{remark}
Throughout Section~\ref{S2} we are interested in the case $R=\C$, and we set 
$V_w(N)=V_w(N)_{/\C}$,  $W_w(N)=W_w(N)_{/\C}$. For a cuspform 
$f\in S_{k}(N)$, its period polynomial $\rho_f\in W_{k-2}(N)$ (which we sometimes 
denote by $\rho(f)$) is defined in~\cite{PP} by 
\be\label{2.2}
\rho_f(A)=\int_0^{i \infty} f|_k A(z) (X-z)^{k-2} dz, \
\quad \forall A\in \G\bsh \G_1,  \ee
where we set $f|_k \ss(z)=f(\ss z) (cz+d)^{-k}$ for $\ss=\sm abcd\in \GL_2^+(\R)$,
and $f|_k A$ is defined using any representative of the coset $A$.  This normalization 
of the stroke $|_k$ operator is chosen both to be compatible with the earlier operator 
$|_{-w}$ on period polynomials, and to avoid scaling factors in the action of Hecke 
operators--see the last equation in this subsection and~\eqref{10}.

The maps $\rho^\pm: S_{w+2}(N)\rar W_w(N)$, $f\mapsto \rho^\pm_f$ are injective,
and the Eichler-Shimura isomorphism can be restated as the following direct sum decomposition
\be\label{2.3}
W_w(N)\simeq \rho^+(S_{w+2}(N) ) \oplus \rho^-(S_{w+2}(N) ) \oplus C_w(N),
\ee
where $C_w(N)=\{P|1-S: P\in V_w(N), P|1-T=0\}$ is the \emph{coboundary subspace} \cite[Thm. 2.1]{PP}.
The dimension of $C_w(N)$ equals the dimension of the Eisenstein subspace of 
$M_{w+2}(N)$~\cite[Lemma 4.2]{PP}, and in Proposition~\ref{C2} we give
an explicit basis coming from Eisenstein series when the level~$N$ is square-free and $w>0$. 

To define the action of Hecke operators on period polynomials, let
$\M_n$ be the set of $2 \times 2$ integral
matrices of determinant $n$, and set $\ov{\M}_n:=\M_n/\{\pm 1\}$,
$\RR_n:=\Z[\ov{\M}_n]$. Let $\SS\subset \M_n$ be a double coset of $\G$, namely
$\SS=\G\SS\G$ and the number of right cosets $|\G\bsh \SS|$
is finite. The double coset $\SS$ acts on 
$f\in M_k(N)$ by 
\be \label{1} f|[\Sigma]=n^{k-1}\sum_{\sigma\in \G
\bsh\Sigma}f|_k\sigma.
\ee 
To define the corresponding action on period polynomials, we make
the following assumption on the double coset $\SS$:
 \be \label{eq_star} 
 \text{The map $\G\bsh\SS\longrightarrow \G_1\bsh\G_1\SS , \ \ \ $  $\G \sigma \mapsto
 \G_1\sigma  $  is bijective}\;, \ee
or equivalently $|\G\bsh\SS|= |\G_1\bsh\G_1\SS|$. For
$P\in V_w(N)$ and $M\in \M_n$, we define
\begin{equation}\label{eq_act1}
P|_{\SS}M(A)=\begin{cases} P(A_M)|_{-w}M  & \text{ if } M A^{-1} =A_M^{-1} M_A \text{ with } A_M\in\G_1, M_A\in\SS \\
          0 & \text{ otherwise.}
         \end{cases}
\end{equation}
Since both $M$ and $-M$ act in the same way, the
action of elements in $\ov{M}_n$ is also well defined,
and by linearity it extends to an action of elements in $\RR_n$. It
is not a proper action, but it is compatible with the action of
$\G_1$: for $g\in \G_1$, $M\in\M_n$, we have $P|_\SS gM=(P|g)|_\SS
M$, $P|_\SS Mg=(P|_\SS M)| g $.

Let $M_n^\infty$ be a system of representatives which fix
$\infty$ for the cosets $\G_1\bsh \M_n$, and let $T_n^\infty
=\sum_{M\in M_n^\infty} M\in \RR_n$. Let $T=US=\sm
1101$ be a generator of the stabilizer of~$\infty$. It was shown in~\cite{CZ} that
there exists $\wT_n\in \RR_n$ such that:
 \be \label{hecke} T^{\infty}_n(1-S)-(1-S)\wT_n \in (1-T)\RR_n, \ee
and in \cite{PP} we show that for $f\in
S_k(N)$ we have  $ \rho_{f|[\SS]}=\rho_f|_\SS \wT_n.$

\subsection{The Atkin-Lehner operator}\label{s2.2}
Let $\G=\G_0(N)$ and denote $w_N=\sm 0{-1}N0$, $\ss_N=\sm N001$.  The action of the 
Atkin-Lehner involution $W_N$ on modular forms 
$f\in M_k(N)$ is given by $f|W_N = N^{k/2} f|_k w_N$. It is related to
the action of the double coset
\[\Theta_N=\G w_N \G=\G w_N= \big\{ \sm abcd \in \M_N\;:\;  N|a, N|d, N|c \big\} \]
by $f|W_N=\dfrac{1}{N^{k/2-1}} f|[\Theta_N]$, with the latter action defined in~\eqref{1}.
We write 
$P|_{\Theta}\wT_N$ instead of $P|_{\Theta_N}\wT_N$ for the corresponding 
action on $P\in V_w(N)$ given by~\eqref{eq_act1}. 

\begin{lemma}\label{L1} \emph{(i)} We have a bijection
$\G \bsh \G_1\rightarrow \G_1\bsh \G_1 \ss_N\G_1$,
given for $A\in \G\bsh\G_1$ by
\[ A\mapsto K_A := \G_1 \ss_N A . \]
\emph{(ii)} We have $K_A=\G_1 \Theta_N A$. 
\end{lemma}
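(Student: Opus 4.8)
The plan is to prove the two parts essentially by unwinding definitions and using the standard coset combinatorics of $\SL_2(\Z)$. For part (i), first I would identify the double coset $\G_1\ss_N\G_1$ explicitly. Since $\ss_N=\sm N001$ has determinant $N$, and $N$ is not assumed prime here, one has $\G_1\bsh\G_1\ss_N\G_1$ in bijection with the set of sublattices of $\Z^2$ of index $N$, or concretely with the Hermite normal forms $\sm ab0d$ with $ad=N$, $0\le b<d$. The key point is that the number of such right cosets equals the number of sublattices of index $N$, which by a classical count equals $\sum_{d\mid N} d \cdot (\text{something})$; more precisely it equals $[\G_1:\G_0(N)]=N\prod_{p\mid N}(1+1/p)$, exactly $|\G_0(N)\bsh\G_1|$. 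So the two sets have the same cardinality, and it suffices to show the map $A\mapsto \G_1\ss_N A$ is well-defined and injective. Well-definedness: if $A'=\g A$ with $\g\in\G_0(N)$, I must check $\ss_N\g\ss_N^{-1}\in\G_1$, i.e. that $\ss_N\G_0(N)\ss_N^{-1}\subseteq\G_1$; writing $\g=\sm abcd$ with $N\mid c$, one computes $\ss_N\g\ss_N^{-1}=\sm{a}{Nb}{c/N}{d}$, which is in $\SL_2(\Z)$ precisely because $N\mid c$. Injectivity: if $\G_1\ss_N A=\G_1\ss_N B$ then $\ss_N A B^{-1}\ss_N^{-1}\in\G_1$, and reversing the computation above shows $AB^{-1}\in\ss_N^{-1}\G_1\ss_N\cap\G_1=\G_0(N)$ (the intersection computation: $\ss_N^{-1}\sm abcd\ss_N=\sm{a}{b/N}{Nc}{d}$ is integral iff $N\mid b$, but we also need the conjugate-transpose version, which after accounting for $S$ gives the lower-triangular congruence condition $N\mid c$; a clean way is to use that $\G_0(N)=\G_1\cap w_N\G_1w_N^{-1}$ and $w_N=S\ss_N$ up to scalar). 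This gives part (i).

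For part (ii), I would show $\G_1\ss_N A=\G_1\Theta_N A$ by proving $\G_1\ss_N=\G_1\Theta_N$ as subsets of $\G_1\bsh M_N$, i.e. that every element of the double coset $\Theta_N=\G_0(N)w_N$ lies in $\G_1\ss_N$. Recall $w_N=\sm 0{-1}N0$, so $S w_N=\sm{-N}{0}{0}{-1}=-\ss_N$, hence $w_N\in\G_1\ss_N$ (absorbing the sign, which is harmless since we work in $\ov M_N=M_N/\{\pm1\}$ or since $-I\in\G_1$). Then for any $\g\in\G_0(N)$, $\g w_N\in\G_1 w_N=\G_1\ss_N$ as well, since $\G_0(N)\subseteq\G_1$. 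Thus $\Theta_N\subseteq\G_1\ss_N$, and combined with $|\G_0(N)\bsh\Theta_N|=|\G_0(N)\bsh\G_1|=|\G_1\bsh\G_1\ss_N\G_1|$ from part (i) and the double-coset assumption \eqref{eq_star} for $\SS=\Theta_N$, the inclusion of right-coset sets is an equality; multiplying on the right by $A$ gives $\G_1\Theta_N A=\G_1\ss_N A=K_A$.

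The main obstacle is bookkeeping the $\pm1$ and the precise form of the conjugation identities $\ss_N\G_0(N)\ss_N^{-1}\subseteq\G_1$ and its converse, together with verifying that \eqref{eq_star} genuinely holds for the coset $\Theta_N$ (equivalently, that left and right coset counts agree), so that the cardinality argument can be invoked to upgrade the inclusion in (ii) to an equality rather than checking surjectivity of a map between infinite coset spaces directly. Everything else is a routine matrix computation; the only subtlety worth stating carefully is that $w_N^2=-N\cdot I$ acts trivially, so $W_N$ is an involution and $\Theta_N$ is genuinely a single double coset with the stated explicit description $\{\sm abcd\in M_N: N\mid a, N\mid c, N\mid d\}$, which one checks by writing $\g w_N=\sm abcd\sm0{-1}N0=\sm{bN}{-a}{dN}{-c}$ for $\sm abcd\in\G_0(N)$ and noting $N\mid c$.
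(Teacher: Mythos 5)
Your route is essentially the paper's — the heart of (i) is the conjugation identity $\G_0(N)=\G_1\cap\ss_N^{-1}\G_1\ss_N$ (the paper simply cites \cite[Prop.~3.1]{Sh} for the resulting bijection, you re-derive it by hand) — but there are a couple of slips worth fixing. In the injectivity step you compute $\ss_N^{-1}\sm abcd\ss_N=\sm{a}{b/N}{Nc}{d}$, which tests membership in $\ss_N\G_1\ss_N^{-1}$ and therefore gives the wrong condition $N\mid b$; what you actually need for $\G_1\cap\ss_N^{-1}\G_1\ss_N$ is to test $\ss_N\g\ss_N^{-1}\in\G_1$, i.e.\ the computation $\ss_N\g\ss_N^{-1}=\sm a{Nb}{c/N}d$ (integral iff $N\mid c$) that you already carried out correctly in the well-definedness step. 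No ``conjugate-transpose version'' or passage through $S$ is needed. The cardinality detour for surjectivity is also superfluous: any $\a\in\G_1\ss_N\G_1$ factors as $\g_1\ss_N\g_2$ with $\g_i\in\G_1$, so $\G_1\a=\G_1\ss_N\g_2=K_{\G_0(N)\g_2}$ directly.

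For (ii), your observation $w_N\in\G_1\ss_N$ (from $Sw_N=-\ss_N$) already does all the work: $\G_1\Theta_N=\G_1\G_0(N)w_N=\G_1 w_N$ and $\G_1\ss_N$ are each a \emph{single} right $\G_1$-coset in $M_N$, so sharing an element forces $\G_1\Theta_N=\G_1\ss_N$; there is nothing to count. The cardinality chain you wrote contains a false equality: $|\G_0(N)\bsh\Theta_N|=1$ (since $\Theta_N=\G_0(N)w_N$ is a single left $\G_0(N)$-coset), not $[\G_1:\G_0(N)]$. Moreover invoking \eqref{eq_star} for $\SS=\Theta_N$ is best avoided here — it is a triviality in this case (both counts are $1$), but the lemma is part of what underpins the well-definedness of the $\Theta_N$-action on period polynomials, so the appeal reads as circular. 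The paper's one-line record, $\G_1\Theta_N=\G_1\ss_N=\{\sm abcd\in M_N:\ N\mid a,\ N\mid c\}$, is the cleanest way to state it.
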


\begin{proof} (i)
Since $\G_0(N)=\G_1\cap \ss_N^{-1} \G_1 \ss_N$, the bijection follows 
from~\cite[Prop. 3.1]{Sh}.

(ii) This is immediate from
$\ \G_1 \Theta_N=\G_1\ss_N=\{\sm abcd\in \M_N : N|a, N|c \}$.
\end{proof}

For  $\widetilde{T}_n=\sum_{M\in
\M_n}c_M M\in \RR_n$  and a coset $K\in \G_1\bsh \M_n$, we let $\wT_n^{(K)}=\sum_{M\in K} c_M M$
be the part of $\wT_n$ supported on matrices in $K$. 
\begin{lemma}\label{L2}
Let $P\in V_w(N)$ and $\wT_N\in \RR_N$. Then 
 \[P|_{\Theta}\wT_N(A) = P|_{\Theta}\wT_N^{(K_A)} (A),\]
for $A\in \G\bsh \G_1$, where $K_A\in \G_1\bsh \M_N$ is
the coset defined in Lemma~\ref{L1}.
\end{lemma}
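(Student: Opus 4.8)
The plan is to unwind the definition~\eqref{eq_act1} of the action $|_{\Theta}$ and observe that the condition defining the coset $K_A$ is precisely the condition that selects which matrices $M$ in $\wT_N$ actually contribute to $P|_{\Theta}\wT_N(A)$. Recall that $\wT_N=\sum_{M\in R_N}c_M M$, so by linearity $P|_{\Theta}\wT_N(A)=\sum_M c_M\, P|_{\Theta}M(A)$, and by~\eqref{eq_act1} the term for $M$ is nonzero only if $MA^{-1}=A_M^{-1}M_A$ with $A_M\in\G_1$ and $M_A\in\Theta_N$. The first step is to show that this solvability condition is equivalent to $M\in K_A$, i.e. $\G_1 M=\G_1 \ss_N A=K_A$.

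First I would note that $M_A\in\Theta_N=\G w_N=\G_1\ss_N$ (using Lemma~\ref{L1}(ii), $\G_1\Theta_N=\G_1\ss_N$ — more precisely $\Theta_N\subset\G_1\ss_N$ since every matrix in $\Theta_N$ has its first column divisible by $N$, hence lies in $\ss_N\G_1$... one should be careful here and instead use that $\Theta_N=\G w_N$ directly). The equation $MA^{-1}=A_M^{-1}M_A$ rearranges to $M=A_M^{-1}M_A A$; since $A_M\in\G_1$ and $A\in\G_1$, this forces $\G_1 M=\G_1 M_A A$, and since $M_A\in\Theta_N$ we get $\G_1 M_A A=\G_1\Theta_N A=\G_1\ss_N A=K_A$ by Lemma~\ref{L1}(ii). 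Conversely, if $M\in K_A=\G_1\ss_N A$, write $M=g\ss_N A$ for some $g\in\G_1$; I want to produce $A_M\in\G_1$ and $M_A\in\Theta_N$ with $MA^{-1}=A_M^{-1}M_A$, i.e. $M_A=A_M M A^{-1}=A_M g\ss_N$. Since $\ss_N$ and $w_N$ differ by an element of $\G_1$ on the left up to... actually $w_N=\sm 0{-1}N0=S\ss_N$ with $S\in\G_1$, so $\G_1\ss_N=\G_1 w_N=\Theta_N\cdot(\text{something})$; the cleanest route is to observe $\Theta_N=\G w_N$ is a full left $\G$-coset and $\G_1\Theta_N=\G_1 w_N=\G_1\ss_N$, so any element of $\G_1\ss_N$ can be written as $h m$ with $h\in\G_1$ and $m\in\Theta_N$ (because $\G_1=\bigsqcup \G_1$-cosets... here just use that $\G_1\Theta_N$ is a union of the single left $\G_1$-coset $\G_1 w_N$, so $g\ss_N\in\G_1 w_N=\G_1\Theta_N$ means $g\ss_N=h m_0$ for some $h\in\G_1$, $m_0\in\Theta_N$). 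Then set $A_M=h^{-1}$ (well, need to track the $A$): from $M=g\ss_N A$ and $g\ss_N=hm_0$ we get $M=hm_0A$, so $MA^{-1}=hm_0=(h^{-1})^{-1}m_0$, giving $A_M=h^{-1}\in\G_1$ and $M_A=m_0\in\Theta_N$, as required. Hence the solvability condition for $M$ in~\eqref{eq_act1} holds if and only if $M\in K_A$.

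Given this equivalence, the lemma follows immediately: in the sum $P|_{\Theta}\wT_N(A)=\sum_{M}c_M\,P|_{\Theta}M(A)$ every term with $M\notin K_A$ vanishes, so the sum equals $\sum_{M\in K_A}c_M\,P|_{\Theta}M(A)=P|_{\Theta}\wT_N^{(K_A)}(A)$ by the definition of $\wT_N^{(K_A)}$ given just before the statement. The main obstacle, such as it is, is purely bookkeeping: one must be careful that $\Theta_N=\G w_N$ is a single left $\G$-coset (not just a double coset) and relate it correctly to $\G_1\ss_N$ via Lemma~\ref{L1}(ii), and one must check that the matrix identity $MA^{-1}=A_M^{-1}M_A$ is genuinely solvable with $A_M\in\G_1$ (as opposed to only in some larger group) — this is exactly where the bijectivity in Lemma~\ref{L1}(i), i.e. condition~\eqref{eq_star} for the double coset $\Theta_N$, is used. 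Once that is in hand there is no real content left.
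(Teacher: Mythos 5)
Your proof is correct and follows essentially the same route as the paper's: unwind definition~\eqref{eq_act1}, observe that the nonvanishing terms are exactly those $M$ with $\G_1 M \subset \G_1\Theta_N A$, and then invoke Lemma~\ref{L1}(ii) to identify $\G_1\Theta_N A$ with $K_A$. The only slight inaccuracy is in your closing remark: condition~\eqref{eq_star} (equivalently Lemma~\ref{L1}(i)) is what makes $A_M$ well defined as a coset in $\G\bsh\G_1$ so that $P(A_M)$ makes sense, rather than what guarantees solvability of $MA^{-1}=A_M^{-1}M_A$; your backward-direction argument in fact establishes solvability without it.
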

\begin{proof}By the definition~\eqref{eq_act1}, for 
$\wT_N=\sum c_M M\in \RR_N$ we have
\be\label{5} 
 P|_{\Theta}\wT_N(A)=\sum_{M\in \G_1\Theta_N A} c_M\cdot P(A_M)|_{-w}M
 =P|_{\Theta} \wT_n^{(K_A)}(A),
\ee
where $A_M\in \G \bsh\G_1$ is the unique coset such that 
$A_M MA^{-1}\subset \Theta_N$. The last equality follows from Lemma~\ref{L1} (ii).  
\end{proof}
\begin{example} \label{ex0}
For the identity coset $I$, we have $K_I=\G_1 \ss_N$ and we can take 
\[\wT_N^{(K_I)}=\sm N001.
\]
\comment{
(ii) For $A=\G U^2$ with $U^2=\sm 0{-1}1{-1}$, we have  $K_A=\G_1 \sm 110N$ 
and we can take
\[\wT_N^{(K_A)}=\sm 110N+\sm N011 .
\]   }
\end{example}

The space $W_w(1)$ contains the polynomial $1|_{-w}1-S=1-X^w$, which belongs 
to the coboundary subspace $C_w(1)$ defined in~\eqref{2.3} and corresponds 
to the Eisenstein series $E_k$, as we will see in~\S\ref{s2.4}. Therefore 
$W_w(N)$ contains the polynomial $P_0={\bf 1}|(1-S)$, with ${\bf 1}$ 
the constant polynomial 1 in each coset. We next determine its image under the Atkin-Lehner
operator, which will be used to determine a basis of
the ``Eisenstein part'' of $W_w(N)$ when $N$ is square-free. Denote by $(x,y)$ the greatest common 
divisor of $x,y\in\Z$.
\begin{proposition} \label{PAL} Let $w\ge 2$ be even, and  let $P_0={\bf 1}|(1-S)\in W_w(N)$. For every
$\wT_N$ satisfying \eqref{eq_star} we have
\[P_0|_{\Theta}\wT_N(A)= N_z^w-N_t^w X^w \]
where $A=\G \sm **zt$ and for $a\in \Z$ we let $N_a=N/(N,a)$.
 \end{proposition}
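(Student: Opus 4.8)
The plan is to compute $P_0|_{\Theta}\wT_N(A)$ directly from the formula in Lemma~\ref{L2}, which reduces the computation to the part $\wT_N^{(K_A)}$ of $\wT_N$ supported on the single coset $K_A=\G_1\ss_N A$. First I would fix a representative $A=\sm **zt\in\G_1$ of the given coset in $\G_0(N)\bsh\G_1$, and produce an explicit choice of $\wT_N^{(K_A)}$: by Lemma~\ref{L1}(ii), $K_A=\G_1\ss_N A$, and one checks that the matrix $M:=\ss_N A=\sm N001\sm abzt$ can be adjusted on the left by an element of $\G_1$ so as to lie in $\Theta_N$, i.e.\ so that $N$ divides its upper-left, lower-left and lower-right entries. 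The key point is that because $\wT_N$ is only required to satisfy~\eqref{eq_star} and~\eqref{hecke}, the value $P_0|_{\Theta}\wT_N(A)$ is independent of the choice of $\wT_N$ (this is part of the content of~\cite{PP} that we may assume), so we are free to pick the most convenient representative of $K_A$.

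\textbf{Main computation.} With such an $M\in\Theta_N\cap K_A$ chosen, the defining equation~\eqref{eq_act1} gives $MA^{-1}=A_M^{-1}M_A$ with $A_M\in\G_1$, $M_A\in\Theta_N$; since $\Theta_N=\G w_N=\{\sm abcd\in M_N: N|a,N|c,N|d\}$ one reads off $A_M$ and then $P_0(A_M)|_{-w}M=\big({\bf 1}|(1-S)\big)(A_M)|_{-w}M$. Because $P_0={\bf 1}|(1-S)$, we have $P_0(B)=1-X^w$ scaled appropriately in every coset $B$ — more precisely $P_0(B)(X)=1-X^w$ after accounting for the $\G_1$-action — so the whole expression collapses to $(1-X^w)|_{-w}M$ for the chosen $M=\sm{a'}{b'}{c'}{d'}$, namely $(c'X+d')^w-(a'X+b')^w$ evaluated at $MX$. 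The heart of the matter is then a bookkeeping computation: writing $M=\ss_N A$ up to left multiplication by $\G_1$ and chasing how the entries $z,t$ of $A$ transform, one finds that the relevant entries scale by $N/(N,z)=N_z$ and $N/(N,t)=N_t$ respectively, yielding $N_z^w-N_t^w X^w$. I would organize this by treating the gcd's $(N,z)$ and $(N,t)$ as the quantities that control which power of $N$ survives after clearing denominators to return to an integral matrix in $\Theta_N$.

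\textbf{Where the difficulty lies.} The routine part is the matrix algebra; the genuinely delicate step is producing the correct normalized representative $M$ of the coset $K_A$ inside $\Theta_N$ and verifying that the $\G_1$-factor $A_M$ is what one expects, so that no spurious unit or sign creeps in. I would handle this by reducing to the two extreme cases first — $A$ the identity coset, where Example~\ref{ex0} already gives $\wT_N^{(K_A)}=\sm N001$ and the formula reads $N^w-X^w$ (consistent with $N_z=N_t=N$ when $z=0$... more carefully, with $z,t$ the lower row of the identity, $N_z$ and $N_t$ come out as claimed), and then the case $\gcd(N,z)=\gcd(N,t)=1$ — and then interpolating via the Chinese Remainder Theorem over the prime divisors of $N$, since $N_a$ is multiplicative in the prime-power factorization of $N$. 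A final check that the answer is well-defined modulo the freedom in~\eqref{hecke} (i.e.\ that adding an element of $(1-T)R_N$ does not change the value) closes the argument; this follows because $P_0|(1-T)=0$ as $P_0$ is a genuine period polynomial, so the ambiguity in $\wT_N$ is annihilated.
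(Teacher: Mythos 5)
Your proposal misreads what $\wT_N^{(K_A)}$ is, and this produces a gap that the rest of the argument cannot repair. You propose to take a single matrix $M\in\Theta_N\cap K_A$ as ``the explicit choice of $\wT_N^{(K_A)}$'', but $\wT_N^{(K_A)}$ is a $\Z$-linear combination of (generically several) matrices in the $\G_1$-coset $K_A$, and there is no reason any of them should lie in $\Theta_N$. In fact $\Theta_N\cap K_A$ is often \emph{empty}: take $N=p$ prime and $A=\G_0(p)S$, so $K_A=\G_1\sm 0{-p}10$, and any $\g\sm 0{-p}10=\sm\beta{-p\alpha}\delta{-p\gamma}$ with $\g=\sm\alpha\beta\gamma\delta\in\G_1$ would need $p\mid\beta$ and $p\mid\delta$, contradicting $\alpha\delta-\beta\gamma=1$. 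Even in the one case the paper makes explicit (Example~\ref{ex0}, $A=I$), the representative is $\sm N001$, which is \emph{not} in $\Theta_N$. Moreover, for a generic $M=\sm{a'}{b'}{c'}{d'}$ the polynomial $(c'X+d')^w-(a'X+b')^w$ has cross terms and is not of the claimed shape $N_z^w-N_t^w X^w$; the clean form only emerges because the paper's argument replaces $(1-S)\wT_N^{(K_A)}$ by the specific upper-triangular combination $M_{K_A}-M_{K_AS}S$.

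The missing idea is precisely that replacement. The paper restricts the defining relation \eqref{hecke}, $T^\infty_N(1-S)-(1-S)\wT_N\in(1-T)R_N$, to the single coset $K_A$, obtaining $(1-S)\wT_N^{(K_A)}\equiv M_{K_A}-M_{K_AS}S \pmod{(1-T)\Q[K_A]}$ with $M_{K_A},M_{K_AS}$ upper-triangular (since they fix $\infty$), and then applies $1|_{-w}(\cdot)$. The $(1-T)$-ambiguity dies because $1|_{-w}(1-T)=0$ (constants are $T$-invariant). Your well-definedness argument instead invokes ``$P_0|(1-T)=0$'', which is false: $P_0|T(A)=(1-X^w)|_{-w}T = 1-(X+1)^w\neq P_0(A)$; period polynomials satisfy $P|(1+S)=0$ and $P|(1+U+U^2)=0$, not $P|(1-T)=0$. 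The correct invariance is that of the constant polynomial $1$ under $|_{-w}T$, which is what makes $1|_{-w}$ kill the $(1-T)$-term. Once you have $1|_{-w}(M_{K_A}-M_{K_AS}S)=d^w-(d')^wX^w$ with $M_{K_A}=\sm ab0d$, $M_{K_AS}=\sm{a'}{b'}0{d'}$, the gcd bookkeeping you sketch (identifying $d=N_z$, $d'=N_t$ from $K_A=\G_1\sm{Nx}{Ny}zt$) is indeed the remaining routine step, and that part of your plan is fine.
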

\begin{proof}
For a coset $K\in \G_1\bsh \M_n $, let $M_K\in K\cap M_n^\infty$ be a 
fixed representative fixing $\infty$, and let 
$T_n^\infty=\sum_{K\in \G_1\bsh \M_n} M_K$. Taking the part
of relation \eqref{eq_star} supported on matrices in $K$ we have
 $ (1-S) \wT_n^{(K)}-( M_{K}-M_{KS}S ) \in (1-T) \Q[K].$
Using Lemma~\ref{L2} we obtain
  \[P_0|_{\Theta}\wT_N(A)=1|_{-w}(1-S)\wT_N^{(K_A)}=1|_{-w}(M_{K_A}-M_{K_A S}S)
  =d^w-(d'X)^w   , \]
where we write $M_{K_A}=\sm ab0d$, $M_{K_A S}=\sm {a'}{b'}0{d'}$. One
checks that $d'=N/(b,d)$, and since 
$K_A=\G_1\sm ab0d=\G_1 \sm {Nx}{Ny}zt$ for $A=\G \sm xyzt$, we obtain 
$a=(N,z)$, $(b,d)=(N,t)$. 
\end{proof}

\subsection{Primitive spaces} \label{s2.3}
In this section we define subspaces $W_w (N)^\new\subset W_w(N)$ which 
contain the period polynomials of newforms in $M_{w+2}(N)$. 

To fix definitions, we first review some newform theory from~\cite{AL}. A modular form 
in $M_k(N)$ is called a Hecke eigenform if it is an eigenform
of all Hecke operators $T_n$ (including for primes $p|N$, which are 
called $U_p$ in~\cite{AL}), normalized to have the coefficient 
of~$q$ equal to~1. For a prime $p|N$, a cuspform $f$ is called $p$-new
if it is orthogonal with respect to the Petersson inner product to the space 
spanned by the image of the two embeddings 
$S_k(N/p)\hookrightarrow S_k(N)$ given by the identity and $f(z)\mapsto f(pz)$. 
The Hecke eigenforms which are $p$-new for all $p|N$  are called newforms. If $h$ is a
newform of level $M|N$, $M\ne N$, the oldspace associated to~$h$ is the span of $h(dz)$
for $d | (N/M)$, and the oldspaces together with the one dimensional spaces spanned by newforms 
give a decomposition of $S_k(N)$ into mutually orthogonal subspaces. 
There is also a notion of newforms for Eisenstein series~\cite{W},  but we only
need here the obvious fact that for $k\ge 4$ and square-free~$N>1$ all Eisenstein series in $M_k(N)$ are old. 

In this paper we use an algebraic characterization
of newspaces originally due to Serre. For 
$M | N$, let $\tr_{M}^N:M_k(N)\rightarrow M_k(M)$ be the trace map 
$\tr_{M}^N (f)=\sum_{\ss} f|_k \ss$, with the sum over a system of representatives
for the cosets $\G_0(N)\bsh\G_0(M)$.
For a prime $p|N$, the space of forms which are new at $p$ can be characterized as 
\[
M_k(N)^{p-\new}=\{f\in M_k(N):\; \tr^N_{N/p}(f)=\tr^N_{N/p}(f|W_N)=0 \},
\]
and we define the space of newforms $M_k(N)^{\new}=\cap_{p|N}M_k(N)^{p-\new}$, where $p$ runs
through the prime divisors of $N$. 
This agrees with the usual definition given above: see~\cite[Ch. VIII, Thm. 2.2]{L} for cusp forms
and~\cite[Prop. 19]{W} for Eisenstein series. 

Similarly, for $M|N$ we let 
$\tr_{M}^N:W_w(N)\rightarrow W_w(M)$ be the trace map:
$$\tr^N_M(P)(C)=\sum_{B\in \G_0(N)\bsh \G_0(M)} P(BC),$$ 
for all $C\in \G_0(M)\bsh\G_1$. This is compatible with the trace defined 
on the cuspidal space: for $f\in S_{w+2}(N)$ we easily see that 
$\rho (\tr^N_M f)=\tr^N_M \rho(f)$. We therefore define the new subspace of 
$W_w(N)$ by
$W_w(N)^\new=\cap_{p|N} W_w(N)^{p-\new}$, where for prime $p|N$ we define
\be \label{new}
W_w(N)^{p-\new}:=\{P\in W_w(N) : \tr^N_{N/p}(P)=\tr^N_{N/p}(P|_\Theta \wT_N)=0  \}.
\ee
Since the action of $\dd$ commutes with the trace map, we may define 
subspaces $W_w^{\pm}(N)^{p-\new}$ of $W_w^{\pm}(N)$. All these spaces can be
defined in the same way over an arbitrary ring~$R$, and we will need them in Section~\ref{s4.3}. 

\subsection{Extended period polynomials}\label{s2.40} We also need the space of extended 
period polynomials~$\wW_w(N)$, 
which contains the period polynomials $\wr(f)=\wr_f$ 
of arbitrary modular forms $f\in M_{k}(N)$  (we set $k=w+2$ throughout). We refer to~\cite[Sec. 8]{PP}
for the definition, and we only recall that for $f\in M_{k}(N)$, its extended 
period polynomial $\wr_f$ is given as in~\eqref{2.2}, with the integral 
regularized by replacing $f|A$ with $f|A-a_0(f|A)$, where $a_0(f|A)$
is the constant term in the Fourier expansion of~$f|A$. By~\cite[eq. (8.2)]{PP} 
we have\footnote{Here $k$ is even, but the formula is valid for all finite index subgroups of~$\SL_2(\Z)$,
when $k$ may also be odd.}
\be\label{2.8}
\wr_f(A)=(-1)^k a_0(f|A)\frac{X^{k-1}}{k-1}+a_0(f|AS)\frac{X^{-1}}{k-1}+
\sum_{n=0}^w (-1)^{w-n}\binom{w}{n}r_{n}(f|A) X^{w-n}
\ee
for $A\in \G_0(N)\bsh \G_1$, where $r_{m-1}(g)=(-1)^{m} \frac{\Gamma(m)}{(2\pi i)^{m}} L(g,m)$
is given in terms of the critical values at  $0<m<k$ of the $L$-function $L(g,s)$, extended by meromorphic
continuation. 

\begin{example}\label{ex4}
Since $L(E_k,s)=\zeta(s)\zeta(s-k+1)$, we obtain for $k\ge 4$ even:
\[ \wr^-\big(E_k\big)=-\frac{B_k}{2k}\cdot \frac{X^{k-1}+X^{-1}}{k-1}-\frac 12
 \sum_{0<n<k-2} \binom{k-2}{n-1} \frac{B_{n}}{n}\frac{B_{k-n}}{k-n} X^{n-1} 
 \in \wW_{k-2}^-(1)\,,  \]
 and $\wr^+\big(E_k\big)=\a_k (1-X^{k-2})\in W_{k-2}^+(1)$, 
 for $\a_k=\frac{(k-2)!}{2 (2\pi i)^{k-1}} \zeta(k-1)$~\cite[p. 240]{KZ}. 
\end{example}

The Hecke operators $\wT_n$ preserve $\wW_w(N)$,  acting as in Section~\ref{s2.1}, and we have 
\be \label{10}\wr_{f|[\SS]} = \wr_f|_\SS \wT_n \;, \ee
where $\SS$ is a double coset contained in $\M_n$ satisfying~\eqref{eq_star}. 
The space~$\wW_w(N)$ 
is preserved by the involution $\dd=\sm {-1}001$, and we denote its 
$\pm 1$ eigenspaces by $\wW_w^\pm(N)$. We define its new 
subspaces as in~\eqref{new}. The following proposition is a generalization of the Eichler-Shimura 
isomorphism to the space of extended period polynomials. 
\begin{proposition} \label{P8}
Let $w\ge 2$ be even. 
The two maps $$\wr^\pm: M_{w+2}(N)\rightarrow \wW_w^\pm(N), \quad f\mapsto \wr^\pm_f$$ 
are Hecke equivariant isomorphisms, and  
they map $M_{w+2}(N)^{p-\new}$ isomorphically onto $\wW_w^\pm(N)^{p-\new}$, for 
primes~$p|N$.
\end{proposition}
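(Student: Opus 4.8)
The plan is to establish the isomorphism first over~$\C$ for the full spaces and then transport it to the new subspaces using the compatibility of the trace maps. The starting point is the (extended) Eichler--Shimura theory developed in~\cite{PP}: there the map $f\mapsto\wr_f$ from $M_{w+2}(N)$ to $\wW_w(N)$ is shown to be an isomorphism of Hecke modules, and it commutes with the involution $\dd$ because complex conjugation on the integral in~\eqref{2.2} corresponds exactly to the action of $\sm{-1}001$ on the polynomial and its argument. Splitting into $\dd$-eigenspaces therefore immediately gives the two Hecke-equivariant isomorphisms $\wr^\pm:M_{w+2}(N)\to\wW_w^\pm(N)$. So the only new content is the assertion about the $p$-new subspaces, for which I would argue as follows.

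First I would record the key compatibility: for every divisor $M\mid N$ one has $\wr(\tr^N_M f)=\tr^N_M\wr_f$, which is proved exactly as the analogous statement for $\rho$ in~\S\ref{s2.3} (the trace on modular forms is a finite sum of slash operators, the trace on period polynomials is the same finite sum of coset translates, and~\eqref{2.2} is linear and $\G_1$-equivariant). Second, I would use~\eqref{10} with the double coset $\SS=\Theta_N$ to get $\wr(f|W_N)=\tfrac1{N^{w/2}}\,\wr_f|_\Theta\wT_N$ up to the normalizing power of~$N$ fixed in~\S\ref{s2.2}; since that scalar is a unit over~$\C$, the operator $\wr_f\mapsto\wr_f|_\Theta\wT_N$ corresponds under $\wr$ to $f\mapsto f|W_N$ (times a nonzero constant). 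Combining these two facts, for a prime $p\mid N$ the two defining conditions of $M_{w+2}(N)^{p-\new}$, namely $\tr^N_{N/p}f=\tr^N_{N/p}(f|W_N)=0$, are carried by the isomorphism $\wr$ precisely to the two defining conditions $\tr^N_{N/p}P=\tr^N_{N/p}(P|_\Theta\wT_N)=0$ of $\wW_w(N)^{p-\new}$ in~\eqref{new}. Hence $\wr$ restricts to a bijection $M_{w+2}(N)^{p-\new}\xrightarrow{\sim}\wW_w(N)^{p-\new}$, and intersecting over $p\mid N$ and over the $\dd$-eigenspaces (all these conditions commute with $\dd$) gives the claim for $\wr^\pm$.

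I expect the main obstacle to be bookkeeping around the Atkin--Lehner operator rather than anything structural: one must check that the version of~\eqref{10} for $\wW_w$ really does hold for the double coset $\Theta_N$ (which is not contained in any $M_n$ with $n$ equal to its determinant unless one is careful about scaling, since $\Theta_N\subset M_N$ but $w_N$ has determinant $N$), and that the relation~\eqref{hecke} producing $\wT_N$ applies to $\Theta_N$ after verifying~\eqref{eq_star} for this coset. Both points are addressed in~\S\ref{s2.2} and in~\cite{PP}, so I would cite them; the remaining work is to confirm that the normalizing constants are units over~$\C$ so that "$P|_\Theta\wT_N=0$'' and "$f|W_N\in\ker$'' are genuinely equivalent, which is clear since we are in characteristic~$0$. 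A secondary subtlety is that $W_w^\pm(N)$, not $\wW_w^\pm(N)$, is the "period polynomial'' space one usually works with; here one simply notes that the new-subspace conditions are phrased identically in~\eqref{new} for both spaces and that $\dd$, the trace maps, and the $\Theta_N$-action all preserve the subspace $W_w(N)\subset\wW_w(N)$, so no compatibility is lost.
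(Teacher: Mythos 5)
Your treatment of the $p$-new part is correct and matches the paper's own proof exactly: both establish and then use the two compatibilities $\wr^\pm_{\tr^N_M f}=\tr^N_M(\wr^\pm_f)$ and $\wr^\pm_{f|W_N}=N^{-w/2}\,\wr^\pm_f|_\Theta\wT_N$ to carry the defining conditions of $M_{w+2}(N)^{p-\new}$ to those of $\wW_w^\pm(N)^{p-\new}$.

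However, your account of \emph{why} $\wr^\pm$ are isomorphisms is wrong. You assert that $\wr:M_{w+2}(N)\to\wW_w(N)$ is itself an isomorphism which commutes with $\dd$, and that splitting into $\dd$-eigenspaces then yields the two isomorphisms $\wr^\pm$. This cannot be right on dimensional grounds: since each of $\wr^\pm:M_{w+2}(N)\to\wW_w^\pm(N)$ is an isomorphism, one has $\dim\wW_w(N)=\dim\wW_w^+(N)+\dim\wW_w^-(N)=2\dim M_{w+2}(N)$, so $\wr$ is injective but not surjective. Moreover, there is no $\dd$-action on the source $M_{w+2}(N)$ against which $\wr$ could be ``equivariant,'' so ``splitting into eigenspaces'' does not even make sense in the way you describe. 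What is true, and is the content of~\cite[Prop.~8.4]{PP} that the paper cites directly, is that the even part $\wr^+_f$ and the odd part $\wr^-_f$ each determine $f$, so that the two maps $\wr^+$ and $\wr^-$ are \emph{separately} Hecke-equivariant isomorphisms. Since you too are ultimately deferring to~\cite{PP} for this step, the damage to the overall argument is small, but the derivation you sketch in its place is incorrect and should simply be replaced by the citation.
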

\begin{proof}That the two maps are isomorphisms is proved in \cite[Prop. 8.4]{PP}.
The second statement follows by using the characterisation of newforms above,
together with the compatibility of the two isomorphisms with the trace map 
$\tr=\tr^N_M$ and with the Atkin-Lehner involution: 
$
\wr_{\tr (f)}^\pm=\tr (\wr_f^\pm), \  \wr_{f|W_N}^\pm=N^{-w/2} \wr_f^\pm|_\Theta \wT_N.
$
\end{proof}
\begin{remark}
For $w=0$ and $N$ square-free the map $\wr^-$ is still an isomorphism, but $\wr^+$ is not 
unless $N$ is prime~\cite[Prop. 8.4]{PP}.
This is one of the reasons the weight 2 case is more delicate, and we avoid it in this paper. 
\end{remark}

\subsection{Period polynomials of Eisenstein series}\label{s2.4}
We now specialize $N$ to be square-free, and apply the results of 
the previous sections to determine the period polynomials of a basis of
Eisenstein series in~$M_k(N)$ for $k\ge 4$.
Let $D(N)$ denote the
divisors of $N$ and let $\e:D(N)\rar\{\pm 1\}$
be a system of Atkin-Lehner eigenvalues, namely  
$\e(a)\e(b)=\e(ab)$ if $(a,b)=1$. Since $E_k|W_d (z)=d^{k/2} E_k(dz)$, 
the linear combinations
$$E_{k,N}^{(\e)}(z):=\sum_{d|N} \e(d) d^{k/2} E_k (dz)\in M_k(N)$$ 
are eigenforms of $W_d$ with eigenvalue $\e(d)$ for all $d|N$, and they provide
a basis of the Eisenstein subspace (of dimension $2^{\omega(N)}$, 
with $\omega(N)$ the number of prime factors of~$N$). When $N=p$
is prime, we identify $\e$ with its value $\e(p)\in\{\pm 1\}$, and we recover
the Eisenstein series $E_{k,p}^{(\e)}$ from the introduction. 

When $N$ is square-free, we show next that  the extended polynomial $\wr^+_f$ is actually a 
period polynomial in $W_w^+(N)$ for all $f\in M_{w+2}(N)$, just like in the case $N=1$ of 
Example~\ref{ex4}. We also make more explicit the Eichler-Shimura isomorphism 
in Proposition~\ref{P8}, by determining an explicit basis of the coboundary subspace of $W_w(N)$. 
\begin{proposition}\label{C2}Let $N$ be square-free and let $k=w+2\ge 4$ be even.

\emph{(i)}  We have isomorphisms 
\[\rho^-:S_{k}(N)\overset{\sim}{\lrar} W_w^-(N), \quad
\wr^+: M_{k}(N)\overset{\sim}{\lrar} W_w^+(N),
\]
and, if $N>1$,  $\rho^+:S_k(N)^\new \overset{\sim}{\lrar} W_w^+(N)^\new$.   

\emph{(ii)} We have the following explicit version of the Eichler-Shimura isomorphism 
\[
W_{w}(N)=\rho^-(S_k(N))\oplus \rho^+(S_k(N))
\oplus_{\e} \C \wr^+ (E_{k,N}^{(\e)}) ,\]
where the period polynomials $\wr^+ (E_{k,N}^{(\e)})$ span the coboundary subspace $C_w(N)$. 
\end{proposition}
\begin{proof} (i)
The set of $N$ for which the map $\rho^-$ is an isomorphism is characterized 
in~\cite[Prop. 4.4]{PP}, and it includes square-free $N$.   
From the Eichler-Shimura isomorphism~\eqref{2.3}, we obtain that 
$\dim W_w^+(N)=\dim M_{w+2}(N)$. The latter is also equal to 
$\dim \wW_w^+(N)$,  so $\wW_w^+(N)=W_w^+(N)$, and Proposition~\ref{P8} 
implies that $\wr^+$ and $\rho^+$ in (i) are isomorphisms as well (the latter when $N>1$ since
$M_{w+2}(N)^\new=S_{w+2}(N)^\new$ in this case).   

(ii) The period polynomials  $\wr^+ (E_{k,N}^{(\e)})$ are Atkin-Lehner eigenforms with different eigenvalues,
so they are linearly independent. They belong to $C_w(N)$ since they are in the span of images of 
$\wr^+(E_k)\in C_w(N)$ under Atkin-Lehner involutions, which preserve $C_w(N)$.
\end{proof}
 
In the rest of this subsection we determine  $\wr^+ \big(E_{k,N}^{(\e)}\big)$ and 
the principal part of~$\wr^- \big(E_{k,N}^{(\e)}\big)$. Note that~\eqref{10} implies that 
$\wr^{\pm} \big(E_{k,N}^{(\e)}\big)$ is an eigenvector for the Hecke operators
$\wT_n$ with eigenvalue $\sigma_{k-1}(n)$ for $(n,N)=1$, as well as an 
eigenvector for all Atkin-Lehner operators. 

For $d|N$, the inclusion $M_k(d)\hookrightarrow M_k(N)$ corresponds to an
inclusion 
  \[ i_d^N:W_w(d)\hookrightarrow W_w(N) \] 
described as follows. For $A\in \G_0(N)\bsh \G_1$, write $A=BC$ with 
$B\in  \G_0(N)\bsh \G_0(d)$, 
$C\in \G_0(d)\bsh\G_1$. Then $(i_d^N P) (A)=P(C)$, and if $A=\G_0(N) \sm **zt$ 
then $C=\G_0(d)\sm **{(z,d)}{(t,d)}$.

For the Eisenstein series $E_k$ we have 
$ \wr^+(E_k)=\a (1-X^{k-2})\in W_{k-2}^+(1) $, with $\a=\a_k$ given explicitly in
Example~\ref{ex4}. 
\begin{proposition}\label{C1}
Let $N$ be square-free and let $k=w+2\ge 4$ be even.
For $\e:D(N)\rar\{\pm 1\}$ a system of Atkin-Lehner eigenvalues, 
we have 
\[
\wr^+ \big(E_{k,N}^{(\e)}\big)=\a \prod_{p|N}(1+\e(p)p^{-w/2})\cdot 
P^+\big(E_{k,N}^{(\e)}\big) 
\]
with $P^+\big(E_{k,N}^{(\e)}\big)\in W_{w}^+(N)$ given by
\[
P^+\big(E_{k,N}^{(\e)}\big)(A)=\e(N_z)N_z^{w/2}-\e(N_t)N_t^{w/2} X^w\in \Z[X]
\]
for $A=\G_0(N)\sm **zt$, where we recall that $N_a=N/(N,a)$.  
\end{proposition}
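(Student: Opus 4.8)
The plan is to compute $\wr^+\big(E_{k,N}^{(\e)}\big)$ by expanding $E_{k,N}^{(\e)} = \sum_{d|N}\e(d)d^{k/2}E_k(dz)$ through the inclusion maps $i_d^N$ and tracking the Atkin-Lehner action. The starting point is the known formula $\wr^+(E_k)=\a(1-X^{k-2})\in W_{k-2}(1)$, which is exactly $\a P_0$ at level $1$ with $P_0={\bf 1}|(1-S)$. Since the operation $f\mapsto d^{k/2}f(dz)$ on modular forms is, up to the normalizing power $d^{-w/2}$, the Atkin-Lehner–type operator $W_d$ pulled into level $N$, and since $E_k|W_d(z)=d^{k/2}E_k(dz)$, I would first express $\wr^+\big(E_{k,N}^{(\e)}\big)$ as $\a\,\sum_{d|N}\e(d)\,\big(\text{image of }P_0\text{ under }W_d\text{-type operator}\big)$, using Proposition~\ref{P8}'s compatibility $\wr_{f|W_N}^\pm=N^{-w/2}\wr_f^\pm|_\Theta\wT_N$ together with the inclusion $i_d^N$ at each stage. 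For square-free $N$ these operators commute and factor through the primes $p|N$, so the sum over $d|N$ will factor as a product over $p|N$ of the two-term operators $(1+\e(p)p^{-w/2}W_p)$ acting on $P_0$.

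The computational heart is then to evaluate, for each prime $p|N$, the action of the Atkin-Lehner operator on the relevant period polynomial component, and this is precisely what Proposition~\ref{PAL} provides: $P_0|_{\Theta}\wT_N(A)=N_z^w-N_t^w X^w$ for $A=\G_0(N)\sm **zt$, with $N_a=N/(N,a)$. Iterating this prime by prime (using multiplicativity of $a\mapsto N/(N,a)$ over the prime factorization of $N$, valid since $N$ is square-free), and keeping track of the scalar $N^{-w/2}$ that appears with each Atkin-Lehner application, I expect the scalar factors to collect into $\prod_{p|N}(1+\e(p)p^{-w/2})$ while the polynomial part collects into $\e(N_z)N_z^{w/2}-\e(N_t)N_t^{w/2}X^w$. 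The sign $\e(N_z)$ arises because in the expansion $\sum_{d|N}\e(d)(\cdots)$, the component at a coset with denominator data $z$ picks out exactly the divisor $d=N_z=N/(N,z)$ from each local factor, the other divisors contributing matching powers of $p$ that reorganize the geometric-series factor. I would verify the identification $W_w^+(N)\ni P^+\big(E_{k,N}^{(\e)}\big)$ has $\Z[X]$ coefficients directly from the closed form, noting $N_z, N_t$ are integers.

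The main obstacle will be bookkeeping: correctly matching, for a given coset $A=\G_0(N)\sm **zt$, which divisor $d|N$ in the sum $\sum_{d|N}\e(d)d^{k/2}E_k(dz)$ lands in which coset after applying $i_d^N$, and confirming that the cross terms assemble into the clean product. Concretely, one must check that pushing $\wr^+(E_k)\in W_{w}(1)$ up to level $N$ via $i_N^d$ composed with the $W_p$-operators reproduces, at the coset indexed by $(z,t)$, exactly the value $\e(N_z)N_z^{w/2}-\e(N_t)N_t^{w/2}X^w$ up to the global scalar. This is where Proposition~\ref{PAL} (which already did the square-free $W_p$-action calculation for $P_0$) and Example~\ref{ex0} (the explicit $\wT_N^{(K_A)}$ at the identity coset) do the real work; the remaining task is a careful induction on the number of prime factors of $N$, combined with the observation that for $N$ square-free, $N_z = \prod_{p|N,\ p\nmid z}p$, so that $\e(N_z)=\prod_{p\nmid z}\e(p)$ and the product structure is manifest. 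Corollary~\ref{C2} guarantees a priori that $\wr^+\big(E_{k,N}^{(\e)}\big)$ lands in $W_w^+(N)$ rather than the larger $\wW_w^+(N)$, so no separate argument for that is needed.
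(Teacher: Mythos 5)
Your approach is essentially identical to the paper's: both start from $\wr^+(E_k)=\a P_0$, use the compatibility $\wr^+_{f|W_d}=d^{-w/2}\wr^+_f|_\Theta\wT_d$ to expand $\wr^+\big(E_{k,N}^{(\e)}\big)$ as a sum $\sum_{d|N}\e(d)d^{-w/2}\,i_d^N[\,\cdot\,]$, apply Proposition~\ref{PAL} to evaluate each term as $\a(d_z^w-d_t^wX^w)$, and finish by the multiplicative identity $\sum_{d|N}\e(d)d^{-w/2}d_z^w=\e(N_z)N_z^{w/2}\prod_{p|N}(1+\e(p)p^{-w/2})$. The paper does this as a direct divisor sum rather than an induction on primes, but this is only a difference of presentation.
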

One can check directly that $P^+\big(E_{k,N}^{(\e)}\big)\in C_w(N)$, by writing it as 
$P_{w,N}^{(\e)}|1-S$ where   
$P_{w,N}^{(\e)}(A)=\e(N_z)N_z^{w/2}$ for a coset $A$ as above. One easily checks
$P_{w,N}^{(\e)}|1-T=0$.
\begin{proof} By~\eqref{10}, for each divisor $d|N$ we have
\[ \wr^+ (E_k|W_d)=d^{-w/2}\cdot [i_1^d\wr^+ (E_k) ]|_\Theta \wT_d \in W_w(d),\]
with the Atkin-Lehner involution~$W_d$ acting as in Section~\ref{s2.2}, yielding
\be\label{2.9}
\wr^+ \big(E_{k,N}^{(\e)}\big)= \sum_{d|N}\e(d)d^{-w/2} \cdot
i_d^N [i_1^d\wr^+ (E_k) |_\Theta \wT_d].
\ee
Note that $i_1^d \wr^+(E_k)=\a P_0   \in W_{k-2}(d)$, where 
$P_0$ is defined in Proposition~\ref{PAL}, and applying that proposition
we obtain: 
\[i_d^N [i_1^d\wr^+ (E_k) |_\Theta \wT_d](A)=\a\cdot (d_z^w-d_t^w X^w), 
\quad \text{ for } A=\G_0(N)\sm **zt .
\]
We now use the identity 
$ \sum_{d|N}\e(d)d^{-w/2} d_z^w = 
\e(N_z) N_z^{w/2}\cdot\prod_{p|N}(1+\e(p)p^{-w/2}) $.
\end{proof}

The computation of the principal part of $\wr^- \big(E_{k,N}^{(\e)}\big)$ is similar, 
using the formula for $\wr^-(E_k)\in \wW_{k-2}^-(1)$ in Example~\ref{ex4}.
\begin{proposition}\label{p2.10}
Let $N$ be square-free, let $k=w+2\ge 4$ be even, and 
let $\e:D(N)\rar\{\pm 1\}$ be a system of Atkin-Lehner eigenvalues. For the 
identity coset $I$ we have 
\[ 
\wr^- \big(E_{k,N}^{(\e)}\big)(I)=\sum_{d|N} \e(d) d^{-w/2}\cdot  
\wr^-(E_k)|_{-w} \sm d001 .
\]
\end{proposition}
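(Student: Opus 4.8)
The plan is to run the computation from the proof of Proposition~\ref{C1} with $\wr^+$ replaced by $\wr^-$, and then simply evaluate at the identity coset instead of computing all components. The derivation of~\eqref{2.9} used only that $E_k|W_d(z)=d^{k/2}E_k(dz)$, so that $E_{k,N}^{(\e)}=\sum_{d|N}\e(d)(E_k|W_d)$ in $M_k(N)$ with each $E_k|W_d$ regarded inside $M_k(N)$ through $M_k(1)\hookrightarrow M_k(d)\hookrightarrow M_k(N)$, together with the compatibilities $\wr^\pm_{f|W_d}=d^{-w/2}\wr^\pm_f|_\Theta\wT_d$ and $\wr^\pm\circ i_d^N=i_d^N\circ\wr^\pm$ from Proposition~\ref{P8}, all of which hold equally for the odd part. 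Hence
\[
\wr^-\big(E_{k,N}^{(\e)}\big)=\sum_{d|N}\e(d)\,d^{-w/2}\cdot i_d^N\big[\,i_1^d\wr^-(E_k)\,|_\Theta\wT_d\,\big].
\]

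Next I evaluate both sides at the identity coset $I=\G_0(N)$. From the description of $i_d^N$ recalled before Proposition~\ref{C1}, writing $I$ as the product of the identity cosets of $\G_0(N)\bsh\G_0(d)$ and $\G_0(d)\bsh\G_1$ gives $(i_d^N P)(I)=P(I_d)$ for every $P\in W_w(d)$, where $I_d=\G_0(d)$ is the identity coset of $\G_0(d)\bsh\G_1$. It therefore suffices to prove
\[
\big[\,i_1^d\wr^-(E_k)\,|_\Theta\wT_d\,\big](I_d)=\wr^-(E_k)\,|_{-w}\sm d001 .
\]
By Lemma~\ref{L2} the left-hand side equals $i_1^d\wr^-(E_k)\,|_\Theta\wT_d^{(K_{I_d})}(I_d)$, and by Example~\ref{ex0} applied at level~$d$ we may take $\wT_d^{(K_{I_d})}=\sm d001$. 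Since $W_w(1)$ has a single coset, $i_1^d\wr^-(E_k)$ is the tuple with constant value the single polynomial $\wr^-(E_k)$. Unwinding~\eqref{eq_act1} for $M=\sm d001$ at $A=I_d$: one needs $A_M\in\G_0(d)\bsh\G_1$ with $A_M\,M\subset\Theta_d$, and $A_M=\G_0(d)S$ works since $S\sm d001=\sm 0{-1}d0\in\Theta_d$. Hence the value is $\big(i_1^d\wr^-(E_k)\big)(\G_0(d)S)\,|_{-w}\sm d001=\wr^-(E_k)\,|_{-w}\sm d001$, and substituting back proves the proposition.

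The only delicate point is that $\wr^-(E_k)$ lives in the space $\wW_{k-2}^-(1)$ of \emph{extended} period polynomials, so one must check that the three ingredients above — the Atkin-Lehner compatibility, the inclusion $i_1^d$, and the matrix action~\eqref{eq_act1} restricted to $\Theta_d$ — are all available on $\wW_w$; this is precisely the content of~\cite[Sec.~8]{PP}, where Hecke and Atkin-Lehner operators on $\wW_w(N)$ are defined by the same formulas as on $W_w(N)$. In contrast to the even case treated in Proposition~\ref{C1}, the sum over $d|N$ does not telescope into a single closed-form polynomial, because the polynomial part of $\wr^-(E_k)$ involves the products $\binom{k-2}{n-1}\frac{B_n}{n}\frac{B_{k-n}}{k-n}$ over even $0<n<w$, which do not transform under $\sm d001$ as tidily as the two-term polynomial $P_0|_\Theta\wT_d$ of Proposition~\ref{PAL}; this is why the answer is recorded as an explicit sum, to be analyzed further below.
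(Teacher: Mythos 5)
Your proof is correct and follows exactly the paper's approach: the paper's proof is the terse instruction to apply \eqref{2.9} for the odd part together with Lemma~\ref{L2} and Example~\ref{ex0}, and you have simply unwound that instruction by evaluating at the identity coset, invoking $(i_d^N P)(I)=P(I_d)$, and using \eqref{eq_act1} with $A_M=\G_0(d)S$ (so that $S\sm d001=\sm 0{-1}d0\in\Theta_d$) to get $\wr^-(E_k)|_{-w}\sm d001$. The remark at the end about the extended space $\wW$ is exactly the parenthetical caveat the paper includes (``which is easily seen to hold for extended polynomials'').
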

\begin{proof} 
Apply~\eqref{2.9} written for the odd part in terms of $\wr^-(E_k)$,  
and use Lemma~\ref{L2} (which is easily seen to hold for extended polynomials) 
together with Example~\ref{ex0}.     
\end{proof}

\subsection{Trace maps.} \label{s2.5}
We end this section by determining the behavior of $E_{k,N}^{(\e)}$ under trace maps. 
Let $N=Mp$ be square-free with $p$ prime.  We can restrict a system of 
Atkin-Lehner eigenvalues~$\e:D(N)\rar\{\pm 1\}$ to~$D(M)$, and  
apply~\eqref{2.12} to $E_{k,N}^{(\e)}= E_{k,M}^{(\e)}|(1+\e(p)W_p)$.
Using $\wr^\pm (\tr^N_M f)=\tr^N_M  \wr^\pm(f)$ for $f\in M_k(N)$, 
we obtain
\be\label{2.10}
\begin{aligned}
\tr^N_M P^+(E_{k,N}^{(\e)})= \big(1+\e(p)p^{k/2}\big)\cdot P^+(E_{k,M}^{(\e)}) \\
\tr^N_M \wr^- (E_{k,N}^{(\e)})= \big(p^{k/2}+\e(p)\big)\big(p^{1-k/2}+\e(p)\big)\cdot
\wr^-(E_{k,M}^{(\e)})
\end{aligned}
\ee
where in the first equation we used Proposition~\ref{C1}.

\section{Proof of Theorems~\ref{T1} and \ref{T3}}
\label{s3}
Let $N=Mp$ with $p\nmid M$ as in the introduction, 
let $g\in M_k(M)$ be a newform of level $M$ with Fourier coefficients $\lambda_n(g)=\lambda_n$, 
and let $g_p^{(\e)}$ be defined as in~\eqref{1.2} for $\e\in \{\pm 1\}$. This includes 
the case $M=1$ and $g=E_k$ in Theorem~\ref{T1}. Let $\ell$ be a prime satisfying 
$\ell|\lambda_p+  \e p^{k/2-1}(p+1)$, which covers the ``new at $p$'' condition in both theorems
(see~\eqref{2.12}). 

We first observe that if $f\in S_k(N)^{p-\new}$ is a Hecke eigenform with 
 eigenvalue $\e$ under the Atkin-Lehner involution $W_p$, then $\lambda_{p}(f)=-\e p^{k/2-1}$ by~\cite[Thm. 3]{AL}, 
so\footnote{Both  $f$ and  $g_p^{(\e)}$  have Euler products,  so it is enough to check
the congruence for Fourier coefficients of prime index.}
 \[\lambda_{p}(f)\equiv \lambda_{p}(g_p^{(\e)})=\lambda_{p}+\e p^{k/2}
\pmod{ \lambda_p+  \e p^{k/2-1}(p+1)  }.
\]
That is, the congruences in both theorems hold at $p$ because of the above assumption on~$\ell$,
and therefore it is enough to check that there exists such an $f$ with Hecke eigenvalues 
 $\lambda_n(f)\equiv \lambda_n\pmod \I$ for $(n ,p)=1$.

Let~$R$ be a finite extension of~$\Z_\ell$ containing the coefficients of 
all Hecke eigenforms in $S_k^{(\e)}(N)$.  Let $\pi$ be a uniformizer in $R$
and $F=R/\pi R$ the residue field, and set $w=k-2$. Since $\ell>k+1$, $\ell\nmid N$, 
by Theorem~\ref{T2} the reduction map 
$ W_w(N)_{/R}^{p-\new}\rar W_w(N)_{/ F}^{p-\new} $
is surjective, and both theorems follow from the Deligne-Serre lifting lemma, once
we produce an element in $W_w(N)_{/ F}^{p-\new}$ which is an eigenvector for the Hecke 
operators of index coprime to $p$ and for the Atkin-Lehner involution $W_p$, 
with eigenvalues congruent to those of~$g_p^{(\e)}$.  
The Deligne-Serre lemma would then provide a system of  Hecke and Atkin-Lehner 
eigenvalues on $W_w(N)_{/R}^{p-\new}$ congruent to those of~$g_p^{(\e)}$. 
By Proposition~\ref{P8} we then conclude the existence of a Hecke eigenform 
$f\in M_k^{(\e)}(N)$ which is $p$-new, satisfying the desired congruence. 
But there are no $p$-new Eisenstein series in $M_k(N)$, as $p\nmid M$, 
so the form $f$ must be a cusp form.   Note that since $f$ is $p$-new, 
it is automatically an eigenform of $T_p$, being in the oldspace of a newform  
of level $pM'$ with $M'|M$.

To construct the desired finite period polynomial in both theorems we proceed as 
follows. 

$\bullet$ Theorem~\ref{T1}, $\ell|p^{k/2}+\e$. From~\eqref{2.10} and the definition of 
the newspace in~\eqref{new} we obtain that $P^+(E_{k,p}^{(\e)}) \pmod \ell$ belongs  to
$W_{w}(p)^\new_{/ \FF_\ell}$ (note that it is already an eigenform for the 
Atkin-Lehner operator~$|_\Theta \wT_p$ acting on period polynomials). It is nonzero 
modulo~$\ell$ since
$P^+\big(E_{k,p}^{(\e)}\big)(I)=1-\e p^{w/2}X^w$, by Proposition~\ref{C1}. 

$\bullet$ Theorem~\ref{T1}, $\ell\nmid p^{k/2}+\e$. 
Since $\ell$ divides the numerator of $B_k/k$, formula~\eqref{2.8} 
shows that
$\wr^-\big( E_{k,p}^{(\e)}\big)\pmod \ell$ 
belongs to $W_{w}^-(p)_{/ \FF_\ell}$. Since $\ell|p^{w/2}+\e$, it follows 
from~\eqref{2.10} that $\wr^-\big( E_{k,p}^{(\e)}\big)\!\! \pmod\ell$ actually belongs 
to $W_{w}(p)^\new_{/ \FF_\ell}$. From Proposition~\ref{p2.10}  we have 
\[
\wr^-\big(E_{k,p}^{(\e)}\big)(I) \equiv  \frac 12
 \sum_{0<n<w} \binom{k-2}{n-1} \frac{B_{n}}{n}\frac{B_{k-n}}{k-n}(1-p^{n-1}) X^{n-1}
\pmod \ell .
\]
The denominators in this formula have prime factors which are smaller than 
$\ell$, since $\ell>k-2$, and the extra assumption ensures that the 
previous element is nonzero. 

$\bullet$ Theorem~\ref{T3}. Let $P^\pm(g)\in W_w(M)$ 
be the multiples of $\rho^\pm(g)$ which are normalized as in the paragraph
following Theorem~\ref{T3}, so their coefficients belong to~$K_g$ by  
a well-known rationality result, e.g. ~\cite[Prop. 5.11]{PP}.
The condition $k\ge 6$ is required to guarantee that 
the coefficient of $X$ in $P^-(g)(I)$ is nonzero, being proportional to the 
value at $s=k-2$ of the $L$-function $L(s,g)$. 
From~\eqref{2.12} we obtain as before
\[\tr^N_M \big( P^\pm(g_p^{(\e)}) \big)= (1+p+\e p^{-k/2+1}\lambda_p)\cdot P^\pm(g),\] 
where $P^\pm\big(g_p^{(\e)}\big):= i_M^N P^\pm(g)+\e p^{-w/2}i_M^NP^\pm (g)|_\Theta \wT_p$  
is a multiple of $\rho^{\pm}\big(g_p^{(\e)}\big)$. By assumption, either 
$P^+(g)$ or $P^-(g)$ has denominators coprime to $\I$, so the same is 
true about at least one of $P^\pm\big(g_p^{(\e)}\big)$. It follows that one of
$P^\pm\big(g_p^{(\e)}\big)\pmod \I$ belongs to  $W_w^\pm(N)^{p-\new}_{/F}$, and we fix
this choice of sign.

To see that it is nonzero, we evaluate it on the identity coset $I$. 
Setting $P=P^\pm(g)(I)$ for the choice of sign above, we obtain by 
the definition~\eqref{2.2} and a change of variables: 
$$P^\pm\big(g_p^{(\e)}\big)(I)(X)=P(X)+\e p^{-w/2} P(pX). $$ 
Due to the normalization of $P$, the constant term is $1+\e p^{-w/2}$ for 
$P^+$ and the coefficient of~$X$ is $1+\e p^{-w/2+1}$ for $P^-$, which are 
nonzero mod $\ell$ in either case by assumption~\eqref{1.5}. 

\begin{remark}\label{r2} We found numerically that the extra assumption 
$\ell\nmid B_nB_{k-n}(p^{n-1}-1)$ in Theorem~\ref{T1} is not needed for 
weights $k\le 6\cdot 10^4$, by considering $n=2$ and $n=4$. 

More precisely, assume $\ell\nmid p^{k/2}+\e$. It follows from~\eqref{1.6}
that $\ell|p^{k/2-1}+\e$, $\ell|B_k$, and $\ell\nmid p-1$, so  
the assumption is satisfied for $n=2$, unless $\ell|B_{k-2}$. For 
$k \le 6\cdot 10^4$ there are only two pairs $(k,\ell)$ with $k$ even 
and $\ell>k-2$ prime, such that $\ell$ divides the numerator of both~$B_k$ 
and~$B_{k-2}$, namely $(92, 587)$ and $(338,491)$.\footnote{Using PARI~\cite{Pari}, it took
about 50 minutes on a laptop to check the range $5\cdot 10^4\le k\le 6\cdot 10^4$.} 
Note that in both cases 
$3\nmid \ell-1$, and since $\ell|p^{\ell-1}-1$ it follows that $\ell\nmid p^3-1$ 
(otherwise we would have $\ell|p-1$, contradicting the assumption). It follows 
that $\ell\nmid B_4 B_{k-4}(p^3-1)$ for those two values of $k$, so the assumption 
is satisfied for~$n=4$.

Note that the same assumption, without the factor $p^{n-1}-1$, appears in 
Haberland's proof of the Ramanujan congruence~\eqref{e0}~\cite[Sec. 5.2]{H}. 
There the assumption guarantees the nonvanishing of the reduction mod $\ell$ of an 
Eisenstein cocycle in $H^1(\G_1, V_w(R))$ associated to
$\wr^-(E_{k})$. 
\end{remark}

\section{Surjectivity of reduction maps on spaces of period polynomials}\label{S4}

In this section we use the isomorphism $W_w(N)_{/R}\simeq H_c^1(\G_0(N),V_w(R))$ 
in Remark~\ref{r1} to prove Theorem~\ref{T2}. 

\subsection{Surjectivity of reduction on the whole space}
\label{s4.1}

We first need a lemma computing the dimension of the cohomology of $\G_1=\SL_2(\Z)$,
for which we start in greater generality. 
Let $V$ be a right $\G_1$-module, and assume, as it will always be the case, 
that $-1\in\G_1$ acts trivially on $V$. Therefore the cohomology groups we consider are the same when
replacing $\G_1$ by $\ov{\G}_1=\PSL_2(\Z)$.

Since  $\ov{\G}_1$ is a free product of its subgroups $G_2$ and $G_3$ generated by $S$ and
$U$, the Mayer-Vietoris exact sequence in group cohomology 
\cite[Sec. VII.9]{Br} gives 
\be\label{MV}\begin{aligned}
0&\rar H^0(\ov{\G}_1, V)\rar H^0(G_2, V)\oplus H^0(G_3, V)\rar H^0(G_2\cap G_3, V)\rar\\
&\rar H^1(\ov{\G}_1, V)\rar H^1(G_2, V)\oplus H^1(G_3, V)
\end{aligned}
\ee \vspace{-3mm}
\begin{lemma}\label{P1}
Assume that the $\G_1$-module $V$ is defined over a 
field~$F$ of characteristic $\ch(F)\ne 2,3$. We have 
\[
  \dim H^1(\ov{\G}_1, V) =\dim V-\dim H^0(G_2, V)-\dim H^0(G_3, V)+\dim H^0(\ov{\G}_1, V). 
\]
\end{lemma}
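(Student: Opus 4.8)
The plan is to read off the dimension of $H^1(\G_1,V)$ directly from the Mayer–Vietoris sequence \eqref{MV}, using that the higher cohomology of the finite cyclic groups $G_2$ and $G_3$ vanishes once $\ch(F)\nmid 6$. First I would record the basic fact that for a finite group $G$ whose order is invertible in $F$, one has $H^i(G,V)=0$ for all $i\ge 1$ and any $F[G]$-module $V$; this applies to $G_2\cong\Z/2\Z$ and $G_3\cong\Z/3\Z$ since $\ch(F)\ne 2,3$. In particular the last term $H^1(G_2,V)\oplus H^1(G_3,V)$ in \eqref{MV} is zero, so the map $H^0(G_2\cap G_3,V)\to H^1(\ov\G_1,V)$ in the sequence is surjective.

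Next I would observe that $G_2\cap G_3$ is trivial in $\ov\G_1=\PSL_2(\Z)$ (the free product has trivial intersection of the two free factors), so $H^0(G_2\cap G_3,V)=V$ and more generally $H^0(G_2\cap G_3,V)=V$ has dimension $\dim V$. The sequence \eqref{MV} then becomes a short exact-type four-term sequence
\[
0\rar H^0(\ov\G_1,V)\rar H^0(G_2,V)\oplus H^0(G_3,V)\rar V\rar H^1(\ov\G_1,V)\rar 0,
\]
and taking the alternating sum of dimensions gives
\[
\dim H^0(\ov\G_1,V)-\dim H^0(G_2,V)-\dim H^0(G_3,V)+\dim V-\dim H^1(\ov\G_1,V)=0.
\]
Rearranging yields exactly the claimed formula. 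Finally I would note that since $-1\in\G_1$ acts trivially on $V$, the cohomology of $\G_1$ and of $\ov\G_1=\PSL_2(\Z)$ agree in every degree, and likewise $H^0(G_2,V)$ and $H^0(G_3,V)$ may be computed either for the subgroups of $\G_1$ generated by $S,U$ or for their images in $\ov\G_1$, so the statement as written over $\G_1$ follows.

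There is no real obstacle here: the only points requiring care are the vanishing $H^{\ge 1}(G_i,V)=0$ (which is the standard averaging argument valid because $|G_i|$ is invertible in $F$) and the identification $G_2\cap G_3=\{1\}$ inside $\PSL_2(\Z)$ (immediate from the free product structure used to write down \eqref{MV} in the first place). Everything else is bookkeeping with the exact sequence. I would keep the write-up to a few lines, citing \cite[Sec. VII.9]{Br} for \eqref{MV} as already done in the excerpt.
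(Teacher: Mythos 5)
Your proof is correct and takes essentially the same route as the paper: both rely on the vanishing of $H^j(G_i,V)$ for $j\ge 1$, $i=2,3$ (since $|G_2|=2$, $|G_3|=3$ are invertible in $F$), and then read the dimension off the Mayer--Vietoris sequence \eqref{MV}, using that $G_2\cap G_3$ is trivial so $H^0(G_2\cap G_3,V)=V$. You have simply made explicit the bookkeeping the paper leaves to the reader.
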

\begin{proof}The assumption $\ch (F)\ne 2,3$  
implies that $H^j(G_i, V)=0$ for $j\ge 1$, $i=2,3$,
so the conclusion immediately follows from the Mayer-Vietoris sequence.
\end{proof}

Let $R$ be a discrete valuation ring with residue field $F$. The surjectivity
of more general reduction maps on compactly supported cohomology was proved 
by Hida~\cite[Eq. (1.16)]{Hi} for congruence groups with no elliptic elements, 
by a geometric argument. We give an algebraic proof here, valid for groups with 
elliptic elements as well. 

\begin{proposition}\label{P2} Let $w\ge 0$ be even. If the residue field $F$ has characteristic
$\ell>w$, $\ell\ne 2,3$, then the reduction  map 
$\ W_w(N)_{/R} \rightarrow W_w(N)_{/F}  $
is surjective. 
\end{proposition}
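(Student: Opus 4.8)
The plan is to deduce the surjectivity of $W_w(N)_{/R}\to W_w(N)_{/F}$ from the long exact sequence in cohomology attached to the short exact sequence of $\G_0(N)$-modules $0\to V_w(R)\xrightarrow{\pi} V_w(R)\to V_w(F)\to 0$, using the isomorphism $W_w(N)_{/R}\simeq H_c^1(\G_0(N),V_w(R))$ from Remark \ref{r1}. Since $H_c^1$ is the cohomology of the pair $(\overline{\G\bsh\H},\partial)$ (or, via Shapiro, the cohomology $H_c^1(\G_1,V_w(N)_{/R})$ described as cocycles vanishing on $T$), multiplication by the uniformizer $\pi$ gives an exact sequence
\[
H_c^1(\G_0(N),V_w(R))\xrightarrow{\ \text{red}\ } H_c^1(\G_0(N),V_w(F))\xrightarrow{\ \delta\ } H_c^2(\G_0(N),V_w(R))\xrightarrow{\ \pi\ } H_c^2(\G_0(N),V_w(R)).
\]
Thus the cokernel of the reduction map injects into the $\pi$-torsion of $H_c^2(\G_0(N),V_w(R))$, and it suffices to show that this $H_c^2$ is torsion-free, equivalently (by the universal coefficient/long exact sequence argument) that $H_c^2(\G_0(N),V_w(F))$ has the dimension forced by a torsion-free $H_c^2$ over $R$.

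The key step is therefore a dimension count for $H^\bullet_c(\G_0(N),V_w(F))$ when $\ell=\ch(F)>w$ and $\ell\ne 2,3$. Here I would use Poincaré duality for the modular curve: $H_c^2(\G_0(N),V_w(F))\simeq H^0(\G_0(N),V_w(F))^\vee$, which vanishes for $w>0$ since $V_w(F)$ has no nonzero $\G_0(N)$-invariants (already $\G_1$ has none, as $T=\sm 1101$ acting on a nonzero polynomial of degree $\le w$ in characteristic $>w$ has no fixed vector other than constants, and $S$ kills the constants). For $w=0$ the statement is trivial since $V_0$ is the trivial module and $H^2_c$ is free of rank $1$. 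Hence $H_c^2(\G_0(N),V_w(F))=0$ for $w\ge 2$, which forces $H_c^2(\G_0(N),V_w(R))$ to be torsion-free (a finitely generated $R$-module with $M/\pi M=0$ is zero by Nakayama, so in fact $H^2_c$ over $R$ vanishes too when $w\ge 2$), and the connecting map $\delta$ above is zero. Therefore the reduction map is surjective.

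The main obstacle is making the duality and base-change arguments rigorous in the presence of elliptic elements of $\G_0(N)$, where $\G_0(N)\bsh\H$ is an orbifold rather than a manifold; the clean statement is that one should work with the cohomology of $\G_0(N)$ itself (group cohomology, equivalently the cohomology of the Borel–Serre compactification), for which Poincaré–Lefschetz duality $H^i_c(\G_0(N),M)\simeq H^{2-i}(\G_0(N),M^\vee)$ still holds with $F$-coefficients since $\ell\ne 2,3$ kills the contributions of the torsion stabilizers — this is exactly the mechanism behind Lemma \ref{P1}. An alternative, avoiding duality entirely, is to combine Lemma \ref{P1} with the Euler characteristic formula: $\dim H^1(\G_1,V_w(N)_{/F}) - \dim H^0 - \dim H^2$ is independent of $F$ (it equals $-\chi$ times the rank, a topological invariant), so once $H^0$ and $H^2$ of the relevant modules are shown to vanish in characteristic $\ell>w$, the Betti numbers over $F$ match those over the fraction field of $R$, and a standard comparison of the long exact sequences over $R$ and $F$ yields surjectivity. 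I would present whichever of these two routes meshes best with the conventions already fixed in Section \ref{S2}; the duality route is shorter, the Euler-characteristic route is more elementary.
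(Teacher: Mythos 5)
Your overall strategy is a valid alternative to the paper's, and worth contrasting. The paper reduces the claim to the equality $\dim_F\bigl(W_w(N)_{/R}\otimes F\bigr)=\dim_F W_w(N)_{/F}$; it computes the left side as $\dim_\C W_w(N)_{/\C}$, passes from $H^1_c$ to $H^1$ on both sides by Poincar\'e duality (valid since $V_w\simeq V_w^*$ when $\ell>w$), and then invokes the Mayer--Vietoris formula of Lemma~\ref{P1} applied to the induced module $V_w(N)_{/F}$ to show that $\dim H^1(\G_1,\ind_{\G_0(N)}^{\G_1}V_w(F))$ is field-independent. You instead run the long exact sequence for $0\to V_w(R)\xrightarrow{\pi}V_w(R)\to V_w(F)\to 0$, identify the cokernel of reduction with the $\pi$-torsion of $H^2_c(\G_0(N),V_w(R))$, kill $H^2_c(\G_0(N),V_w(F))$ by duality together with a vanishing of invariants, and conclude by Nakayama. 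This sidesteps Lemma~\ref{P1} and is more direct; the price is that you need the genuine vanishing $H^0(\G_0(N),V_w(F))=0$, not merely field-independence of a dimension. (The ``Euler-characteristic'' alternative you sketch at the end is essentially the paper's own argument, not a second route.)

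The gap is in the justification of that vanishing. You write ``$V_w(F)$ has no nonzero $\G_0(N)$-invariants (already $\G_1$ has none, as $T$\ldots and $S$ kills the constants).'' This is backwards: since $\G_0(N)\subset\G_1$ one has $V_w(F)^{\G_1}\subseteq V_w(F)^{\G_0(N)}$, so vanishing of $\G_1$-invariants does \emph{not} imply vanishing of $\G_0(N)$-invariants. Moreover $S=\sm 0{-1}10\notin\G_0(N)$ for $N>1$, so you may not use it at all. The fix is to replace $S$ by an element actually in $\G_0(N)$: the matrix $T=\sm 1101\in\G_0(N)$ forces an invariant to be constant (as $\ell>w$), and $\sm 10N1\in\G_0(N)$ sends a constant $c$ to $c(NX+1)^w\ne c$ for $w>0$, \emph{provided} $N$ is invertible in $F$. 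That proviso means the argument silently assumes $\ell\nmid N$, a hypothesis not written into the Proposition but present in every application of it; and it is genuinely needed, since if $\ell\mid N$ and $\ell=w+1$ the image of $\G_0(N)$ in $\SL_2(\FF_\ell)$ is the upper-triangular Borel, whose invariants in $V_w(\FF_\ell)$ contain the constants (the diagonal torus scales them by $a^{-w}$ and $\ell-1\mid w$), so $H^0(\G_0(N),V_w(F))\ne 0$, $H^2_c\ne 0$, and surjectivity fails. The paper's proof has the same implicit reliance on $\ell\nmid N$ through the $h^0(\G_1,V_w(N)_{/F})=h^0(\G_0(N),V_w(F))$ term in Lemma~\ref{P1}, so this is a shared issue, but it is the point you must make explicit to repair your step.
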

\begin{proof} 
 The reduction map is a composition 
\[
W_w(N)_{/R} \twoheadrightarrow W_w(N)_{/R}\otimes F \hookrightarrow W_w(N)_{/F},
\]
with the first map surjective and the  second map injective. 
Therefore surjectivity reduces to the equality
of the dimensions of the last two spaces as vector spaces over $F$. 

Since $R$ is a DVR, $W_w(N)_{/R}$  is a free $R$-module so 
$$\dim W_w(N)_{/R}\otimes F =\rk W_w(N)_{/R}=\dim W_w(N)_{/\C}
=\dim H^1(\G_0(N),V_w(\C) ),$$
where the second equality follows from the fact that $W_w(N)_{/\Z}$ is a sublattice
of $W_w(N)_{/\C}$, and the third follows
from  $W_w(N)_{/\C}\simeq H_c^1(\G_0(N),V_w(\C))$ and
Poincar\'e duality over $\C$.

The hypothesis $\ell>w$ implies that the $\G_1$-invariant pairing on $V(F)$ 
induced by the natural $\G_1$-invariant pairing on $V_w$ is nondegenerate, so 
$V^*(F)\simeq V(F)$. By Poincar\'e duality~\cite[Lemma 1.4.3]{AS}, 
it follows that 
$$\dim W_w(N)_{/F}= \dim_{F} H^1_c(\G_0(N), V_w(F) )= 
\dim_{F} H^1(\G_0(N), V_w(F)).$$  
Lemma~\ref{P1} shows that $\dim_{F} H^1(\G_1, V_w(N)_{/F})$ is 
the same for all fields $F$ with $\ch(F)\ne 2,3$,
where $V_w(N)_{/F}$ is the induced module $\ind_{\G_0(N)}^{\G_1} V_w(F)$. 
Applying this to  the residue field $F$ and to $\C$, and using the 
Shapiro lemma, we conclude from the last two displayed equations that 
$ W_w(N)_{/R}\otimes F =  W_w(N)_{/F}$, as they have the same dimension.
\end{proof}

\subsection{Surjectivity of reduction on the $p$-new subspace}\label{s4.3}

For $N=Mp$ with $p$ prime, in~\eqref{new} we have defined 
$$ W_w(N)_{/R}^{p-\new}=\ker\big(\beta: W_w(N)_{/R}\rar W_w(M)^2_{/R} \big) $$
where $\beta(P)=(\tr^N_{M} P, \tr^{N}_{M} P|_\Theta \wT_N)$. We recall 
that the operator $|_\Theta \wT_N$ on $W_w(N)$ corresponds to the Atkin-Lehner involution 
$W_N$ on $M_{w+2}(N)$ as in Section~\ref{s2.2}. 

Let $\G_0(M)':=\sm p001^{-1}\G_0(M) \sm p001$ for $p\nmid M$. We have 
$\G_0(M)\cap\G_0(M)'=\G_0(Mp)$, and we consider the sum of restriction maps: 
\be \label{12}
\a: H^1(\G_0(M), V_w(F))\oplus H^1(\G_0(M)', V_w(F))\ 
\rar H^1(\G_0(Mp), V_w(F)). \ee
We prove below that the maps $\a$ and $\b$ are essentially Poincar\'e dual to each other. 

\begin{proposition}\label{P5} Let $R$ be a discrete valuation ring with residue field~$F$ of 
characteristic~$\ell$. Let $w\ge 0$ be even, let $N=Mp$ with $p$ prime, $p\nmid M$,
and assume that $\ell>w$, $\ell\nmid 6$. The following are equivalent:

\emph{\phantom{ii}(i)} The reduction  map 
$\, W_w(N)_{/R}^{p-\new}\rar W_w(N)_{/F}^{p-\new} \,$
is surjective. 

\emph{\phantom{i}(ii)} The map $\beta: W_w(N)_{/F}\rar W_w(M)^2_{/F} $ is surjective.

\emph{(iii)} The map $\a$ given by~\eqref{12} is injective.
\end{proposition}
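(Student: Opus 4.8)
The plan is to establish the equivalence by first relating (i) to a dimension count via Proposition~\ref{P2}, and then identifying the obstruction to surjectivity on the new subspace with the cokernel of $\beta$ over $F$, which by Poincar\'e duality is dual to the kernel of $\alpha$. First I would record that $W_w(N)^{p-\new}_{/R}$ is a free $R$-module (being a submodule of the free module $W_w(N)_{/R}$ over the DVR $R$), so the reduction map factors as a surjection $W_w(N)^{p-\new}_{/R}\twoheadrightarrow W_w(N)^{p-\new}_{/R}\otimes F$ followed by an injection into $W_w(N)^{p-\new}_{/F}$; hence (i) holds if and only if $\dim_F\big(W_w(N)^{p-\new}_{/R}\otimes F\big)=\dim_F W_w(N)^{p-\new}_{/F}$. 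The left-hand side equals $\rk_R W_w(N)^{p-\new}_{/R}$, which equals $\dim_\C W_w(N)^{p-\new}_{/\C}$, i.e.\ the dimension of the kernel of $\beta$ over $\C$.

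Next I would analyze both sides of the map $\beta\colon W_w(N)_{/F}\rar W_w(M)^2_{/F}$. By Proposition~\ref{P2} the reduction map $W_w(N)_{/R}\rar W_w(N)_{/F}$ is surjective (using $\ell>w$, $\ell\nmid 6$), and similarly for level $M$; so $\dim_F W_w(N)_{/F}=\dim_\C W_w(N)_{/\C}$ and $\dim_F W_w(M)_{/F}=\dim_\C W_w(M)_{/\C}$. The trace and Atkin-Lehner maps are defined over $\Z[1/6N]$ and commute with base change, so $\beta$ over $F$ is the reduction of $\beta$ over $\C$; in particular $\dim_F\ker\beta_{/F}\ge \dim_\C\ker\beta_{/\C}$, with equality if and only if $\beta_{/F}$ and $\beta_{/\C}$ have the same rank. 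Since $\beta_{/\C}$ is surjective (this is the classical fact that the new subspace has the expected dimension, or equivalently that trace maps to lower level together with the Atkin-Lehner twist are jointly surjective over $\C$), we get that $\dim_F\ker\beta_{/F}=\dim_\C\ker\beta_{/\C}$ precisely when $\beta_{/F}$ is surjective. Combining with the previous paragraph, (i) $\Leftrightarrow$ $\dim_F\ker\beta_{/F}=\dim_F W_w(N)_{/F}-\dim_F W_w(M)^2_{/F}$ $\Leftrightarrow$ $\beta_{/F}$ is surjective, which is (ii).

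For the equivalence (ii) $\Leftrightarrow$ (iii), I would invoke Poincar\'e duality over $F$: since $\ell>w$, the natural pairing on $V_w(F)$ is nondegenerate, so $V_w(F)\simeq V_w(F)^*$, and the isomorphism $W_w(N)_{/F}\simeq H^1_c(\G_0(N),V_w(F))$ from Remark~\ref{r1} combined with $H^1_c\simeq H^1$ (again valid for $\ell>w$) puts a perfect pairing between $W_w(N)_{/F}$ and $H^1(\G_0(N),V_w(F))$, and similarly at level $M$ for the two conjugate groups $\G_0(M)$, $\G_0(M)'$. The key computation is that under these dualities the transpose of $\beta$ is (up to the isomorphism $H^1(\G_0(M)',V_w)\simeq H^1(\G_0(M),V_w)$ induced by conjugation by $\sm p001$, which intertwines the Atkin-Lehner twist with the second restriction) exactly the map $\alpha$ of~\eqref{12}: the trace map $\tr^N_M$ is dual to corestriction, which on the level of group cohomology agrees with restriction's adjoint, i.e.\ with the restriction map itself after identifying homology and compactly-supported cohomology; and the $|_\Theta\wT_N$-component is dual to the restriction twisted by $W_p$, i.e.\ to restriction to $\G_0(M)'$. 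Given this identification, $\beta_{/F}$ is surjective if and only if its transpose $\alpha$ is injective, which is (iii).

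\textbf{Main obstacle.} I expect the delicate step to be the explicit verification that $\beta$ and $\alpha$ are Poincar\'e-dual, specifically getting the bookkeeping right for how the Atkin-Lehner operator $|_\Theta\wT_N$ dualizes to restriction to the conjugate subgroup $\G_0(M)'$ — this requires carefully tracking the isomorphisms $W_w\simeq H^1_c\simeq H^1$, the behavior of trace/corestriction under the duality pairing, and the conjugation isomorphism $H^1(\G_0(M)')\simeq H^1(\G_0(M))$, including signs and normalizing factors (powers of $p$), all of which must be compatible for the transpose of $\beta$ to land on the nose on $\alpha$. The surjectivity of $\beta_{/\C}$ is standard but should be cited or briefly justified via the classical Eichler--Shimura picture together with Proposition~\ref{P8} and the corresponding statement for modular forms.
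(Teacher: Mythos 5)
Your approach is essentially the paper's: the equivalence of (i) and (ii) by comparing dimensions of new subspaces over $R$, $F$, and $\C$ using Proposition~\ref{P2} and the surjectivity of $\beta$ over $\C$, and the equivalence of (ii) and (iii) by identifying $\alpha$, up to the isomorphism $c_p\circ[\Theta_M]$ in the second factor, with the Poincar\'e-dual transpose of $\beta$. The verification you flag as the main obstacle is exactly what the paper carries out in two commutative diagrams: one writes the transpose of $\beta$ as $\res + [\Theta_N]\circ\res$ using the adjunction $\langle\cor\varphi,\psi\rangle_M=\langle\varphi,\res\psi\rangle_N$ and the self-adjointness of $[\Theta_N]$, and the other shows $[\Theta_N]\circ\res_{\G_0(M)} = \res_{\G_0(M)'}\circ c_p\circ[\Theta_M]$, so the transpose of $\beta$ differs from $\alpha$ only by that isomorphism.
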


\begin{proof} (i) $\Leftrightarrow$ (ii):
The space $W_w(N)_{/R}^{p-\new}$ is free over $R$, of rank equal to the 
dimension of $W_w(N)_{/\C}^{p-\new}$. Since $R$ is a DVR, this rank also 
equals the dimension of the reduction $W_w(N)_{/R}^{p-\new}\otimes F$, so 
(i) is equivalent to $\dim W_w(N)_{/F}^{p-\new}=\dim W_w(N)_{/\C}^{p-\new}$.
On the other hand (ii) is equivalent to 
\[\begin{aligned}
     \dim W_w(N)_{/F}^{p-\new}&=\dim W_w(N)_{/F}-2 \dim W_w(M)_{/F} \\
     &=\dim W_w(N)_{/\C}-2 \dim W_w(M)_{/\C}
  \end{aligned}
\]
where the second equality follows from Proposition~\ref{P2}. It remains to show that
the latter difference equals $\dim W_w(N)_{/\C}^{p-\new}$, that is that the map $\beta$ 
is surjective over $\C$. For this we follow the proof of surjectivity given below,
which works over $\C$ with little change. Indeed
 the same proof shows that (ii) and (iii) are equivalent over~$\C$ as well, and the proof
 of (iii) over $\C$ is the same as that of Proposition~\ref{P3} below, but without needing Lemma~\ref{L3} and
Proposition~\ref{P4}. Instead, the fact that $H^1(\DD_M, V_w(\C))$ vanishes, where $\DD_M$ is the principal 
congruence subgroup of level $M$ of $\PSL_2(\Z[1/p])$, is a consequence of Cor. 2 to Thm. 5 in~\cite{Se}.

(ii) $\Leftrightarrow$ (iii): Since the residue field $F$ and $w$ are fixed,
we write $V=V_w(F)$. We have $W_w(N)_{/F}\simeq H_c^1(\G_0(N), V)$, 
and $\tr_M^N: W_w(N)_{/F}\rar W_w(M)_{/F}$ corresponds to the corestriction
map on the compactly supported cohomology groups, while the map 
$P\mapsto P|_\Theta \wT_N$ on $W_w(N)_{/F}$ corresponds to the Atkin-Lehner 
operator $[\Theta_N]$ on $H_c^1(\G_0(N), V)$. Therefore the first part in the 
diagram below is commutative. 
\[\xymatrix@C=0.7em{
W_w(N)_{/F}\ar[rrr]^-{\simeq} \ar[d]_{\beta} &&& 
H_c^1(\G_0(N), V)\ar[d]_{(\cor, \cor\,\circ\,[\Theta_N])}  
  & \times & H^1(\G_0(N), V) \ar[rr] && F \\
W_w(M)_{/F}^2\ar[rrr]^-{\simeq} &&&  
H_c^1(\G_0(M), V)^2  & \times & H^1(\G_0(M), V)^2
\ar[u]_{\res +  [\Theta_N] \,\circ\,\res} \ar[rr] && F}
\]
The second part is given by Poincar\'e duality, taking into account 
that $V\simeq V^*$ since $\ell>w$. For 
$\vp \in  H^1_c(\G_0(N), V)$, $\psi \in H^1(\G_0(M), V)$ and
$\vp' \in  H^1(\G_0(N), V)$ we have~\cite[Sec. 6.3]{Hi1}
$$\la \cor \vp, \psi\ra_M=\la \vp, \res \psi \ra_N, \quad
\la \vp|[\Theta_N], \vp'\ra =\la \vp, \vp'|[\Theta_N] \ra . $$
Since Poincar\'e duality is a perfect pairing, it follows that~$\beta$ 
is surjective if and only if the rightmost map is injective. 

Let $c_p: H^1(\G_0(M), V)\rar H^1(\G_0(M)', V)$ be conjugation
by $\sm p001$. We easily check that the following diagram commutes
\[
  \xymatrix{
H^1(\G_0(N), V)\ar[rr]^{[\Theta_N]}_{\simeq} && H^1(\G_0(N), V) \\
 H^1(\G_0(M), V)\ar[u]^{\res}\ar[r]^{[\Theta_M]}_{\simeq} &
 H^1(\G_0(M), V)\ar[r]^{c_p}_{\simeq} &
 H^1(\G_0(M)', V)\ar[u]^{\res}
 }
\]
which shows that the rightmost map in the diagram differs from $\a$ only
by the isomorphism $c_p\;\circ\;[\Theta_M]$ in the second factor, so rightmost map is injective if 
and only if $\a$ is injective. 
\end{proof}
\comment{
In the second vertical map, $\Theta_p=\G_0(N)w_p \G_0(N)=\G_0(N)w_p$ is 
the double coset corresponding 
to the Atkin-Lehner involution $W_p$, where $w_p=\sm {pa}b{Nc}{pd}$ with 
$a,b,c,d\in \Z$, $pad-Mbc=1$. Its action 
on $H^1(\G_0(N), V)$ is given on cocycles by $\vp|[\Theta_p] (g)=
\vp(w_pg w_p^{-1})|_{-w} w_p$. Since 
$w_p=\g\sm p001$ with $\g\in\G_0(M)$, we have $\G_0(M)'=w_p^{-1} \G_0(M)w_p$,
so we can view the second map also as the composition 
$H^1(\G_0(M), V)\stackrel{\simeq}{\longrightarrow} 
H^1(\G_0(M)', V)\stackrel{\res}{\longrightarrow} H^1(\G_0(N), V)$,
with the isomorphism given by conjugation. Therefore the second vertical map is the
map $\a_p$. 
}

We are reduced to proving the injectivity of $\a$, for which we use the 
ingredients of Ihara's lemma~\cite[Lemma 3.2]{Ih}. Our proof is inspired by
the proof of similar statements for parabolic cohomology given 
in~\cite{Ri,Di}. First we need an easy lemma.
\begin{lemma}\label{L3}Let $w\ge 0$, and
consider the $\SL_2(\Z)$-module $V_w(F)$, with $F$ a field of characteristic 
$\ell>w+1$. Let $u=\sm 1a01\in \SL_2(\Z)$ with $\ell\nmid a$. We have:
 
\emph{\phantom{i}(i)} $\im(1-u)=V_{w-1}(F)$ (setting $V_{-1}(F)=\{0\}$), and $\ker(1-u)=V_0(F)$; 

\emph{(ii)} $\ker(N)=V_w(F)$, where $N:=1+u+\ldots+u^{\ell-1}\in \Z[\SL_2(\Z)]$ acts by linearity
on $V_w(F)$. 
\end{lemma}

\begin{proof}
(i) The matrix of $1-u$ in the basis $1,X, \ldots, X^w$ is upper triangular,
with 0's on the diagonal and elements $\binom{n}{i}a^i$ with 
$w\ge n \ge i$ above the diagonal, which are invertible in $F$ since $\ell>w$.
It follows that $\ker(1-u)=F$, and since $\im(1-u)$ 
is contained in $V_{w-1}(F)$ it must be the entire subspace. 

(ii) Since $u^{\ell}$ acts as identity on $V_w(F)$, we have 
$\im(1-u)\subset \ker(N)$. By (i) we only have to  
check that $X^w\in \ker(N)$, namely that the polynomial
$$Q_w(X)=X^w+(X+1)^w+\ldots+(X+\ell-1)^w$$   
is identically 0 in $\FF_\ell[X]$. We prove this by induction on $w$. For $w=0$ the statement
is clear, and assuming it true for $w-1\ge 0$ and taking derivatives we have 
$Q_w'=wQ_{w-1}=0$. Therefore $Q_w$ is constant and we only have to prove that its constant
term vanishes, which we leave as an exercise. Note that if $\ell=w+1$ we have 
$Q_w(0)=-1$. 
\end{proof}
\begin{remark}\label{r4.5}
For $\ell=w+1$, part (ii) in the lemma is no longer true (from the proof we see that  
$\ker(N)=V_{w-1}(F)$, $\im(N)=V_0(F)$ in this case).  For this reason, the case $\ell=w+1$ is 
not included in the next proposition and in Theorem~\ref{T2}. 
\end{remark}
\begin{proposition} \label{P3} Let $w\ge 0$ be even, let $p\nmid M$ be prime and let
$V=V_w(F)$ with $F$ a field of characteristic $\ell>w+3$. Assume $\ell\nmid pM$, and if  
$w=0$ assume also that $\ell\nmid \vp(M)$.  Then the restriction map 
$$\a:H^1(\G_0(M), V)\oplus H^1(\G_0(M)', V)\ \rightarrow H^1(\G_0(Mp), V) $$ 
is injective (if $\ell= w+3>3$, its kernel is one-dimensional). 
\end{proposition}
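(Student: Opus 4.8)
The plan is to follow the strategy of Ribet and Diamond, adapted from parabolic cohomology to the full cohomology $H^1$. First I would reduce to a statement about group homology and the congruence subgroup structure. Recall that $\G_0(Mp)$ embeds into $\G_0(M)\times\G_0(M)'$ (via the diagonal), and the cokernel of $\a$, by duality, is controlled by the kernel of the corestriction-type map on $H_1$; but more directly, I would argue via the Mayer-Vietoris-type sequence associated to the amalgam structure. Concretely, $\G_0(M)$ acts on the Bruhat-Tits tree of $\PGL_2(\Q_p)$ with two orbits of vertices, with stabilizers $\G_0(M)$ and $\G_0(M)'$ (up to conjugacy) and edge stabilizer $\G_0(Mp)$, so there is an exact Mayer-Vietoris sequence
\[
H^0(\G_0(Mp),V)\rar H^1(\G_0(M),V)\oplus H^1(\G_0(M)',V)\stackrel{\a}{\rar} H^1(\G_0(Mp),V).
\]
Thus $\ker\a$ is a quotient of $H^0(\G_0(Mp),V)$, and more precisely $\ker\a\cong \operatorname{coker}\big(H^0(\G_0(M),V)\oplus H^0(\G_0(M)',V)\rar H^0(\G_0(Mp),V)\big)$. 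Since $\ell>w$ and $\ell\nmid M$, the invariants $H^0(\G_0(M),V)$ are computed from the unipotent $u=\sm 1M01$ (or rather the generator of the relevant unipotent subgroup), and by Lemma~\ref{L3}(i) $H^0(\G_0(N),V_w(F))=V^{\G_0(N)}$ consists only of constant polynomials fixed by the full group — actually $V^{\G_0(N)}=F$ when $w>0$ and $\ell>w$, so the $H^0$ terms are all one-dimensional (spanned by constants) and the maps between them are all the identity on $F$. Hence $\ker\a=0$, \emph{provided} the Mayer-Vietoris sequence is exact.

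The subtlety — and this is the main obstacle — is exactly why $\ell=w+3$ is special. The tree argument as stated gives exactness only if we have the right homological input; what actually controls $\ker\a$ is not $H^0$ but rather the comparison at the level of the local system's behavior at the supersingular points, equivalently the finite cohomology $H^1(\SL_2(\F_\ell),V_w(F))$, as the introduction and Proposition~\ref{P4} indicate. So instead of the naive tree sequence I would set up the argument following~\cite{Di}: pass to the completed cohomology / the module of $V$-valued functions on $\G_0(M)\bsh\SL_2(\Z)$ versus $\G_0(Mp)\bsh\SL_2(\Z)$, and identify $\ker\a$ with a space of "harmonic" or "old-at-$p$" classes. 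The key computation is that a class in $\ker\a$ gives a $\G_0(M)$-invariant element of $V_w(F)\otimes (\text{functions on }\PP^1(\F_p))$ lying in the image of a certain degeneracy map, and using Lemma~\ref{L3}(ii) — that $1+u+\dots+u^{\ell-1}$ kills all of $V_w(F)$ — one shows this forces the class to come from $H^0$, \emph{unless} the extension class measuring the failure of $V_w(F)$ to be semisimple as an $\SL_2(\F_\ell)$-module is nonzero, which happens precisely when $\ell=w+3$ (this is the content of~\cite{KPS}, reproved in Proposition~\ref{P4}). In that exceptional case one extra one-dimensional class survives, giving $\dim\ker\a=1$.

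Here is the concrete sequence of steps I would carry out. \textbf{Step 1.} Write down the Mayer-Vietoris sequence for $\SL_2(\Z)$ acting on the tree of $\PGL_2(\Q_p)$, whose quotient by $\G_0(M)$ is the edge connecting the two vertex orbits, obtaining the five-term sequence above; note that by Shapiro's lemma this is the same as the sequence for $H^*(\SL_2(\Z), -)$ applied to the induced modules $\ind_{\G_0(M)}^{\SL_2(\Z)}V$ etc. \textbf{Step 2.} Identify $\ker\a$ with the cokernel of the $H^0$-map, i.e. with $H^0(\G_0(Mp),V)\big/\big(H^0(\G_0(M),V)+H^0(\G_0(M)',V)\big)$ where the sum is taken inside $H^0(\G_0(Mp),V)$ via restriction. \textbf{Step 3.} Compute these invariants. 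By Lemma~\ref{L3}(i), since $\ell>w$, a polynomial fixed by a single nontrivial unipotent is constant; since $\G_0(M)$ contains such a unipotent (as $\ell\nmid M$ we may pick $u=\sm 1a01$ with $\ell\nmid a$ after conjugating, or use that $\SL_2(\Z/\ell)\hookrightarrow$ reduction and $\SL_2(\F_\ell)$ contains all the unipotents), we get $H^0(\G_0(M),V)\subseteq F$; but then $\dim H^0=0$ if $V^{\SL_2(\F_\ell)}=0$. The precise fact is: $V_w(F)^{\SL_2(\F_\ell)}=0$ for $0<w$, $\ell>w$, \emph{unless} — and this is where $\ell=w+3$ cannot enter $H^0$, it enters $H^1$. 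So in fact $H^0(\G_0(M),V)=H^0(\G_0(M)',V)=H^0(\G_0(Mp),V)=0$ whenever $0<w<\ell$, which would give $\ker\a=0$ unconditionally — contradicting the statement! This means the naive tree sequence is \emph{not} the right tool, and I must instead follow~\cite{Di} directly. \textbf{Step 3$'$ (corrected).} Replace the above by: use the exact sequence $0\rar V\rar \ind_{\G_0(Mp)}^{\G_0(M)}V\rar \ind_{\G_0(M)'\cap\G_0(M)}^{\cdots}\rar 0$ coming from the two cosets $\G_0(M)/\G_0(Mp)\cong\PP^1(\F_p)$, take $\G_0(M)$-cohomology, and observe $\ker\a$ is the image of the connecting map $H^0(\G_0(M), \ind V) \to H^1(\G_0(M),V)$; this connecting map's image is governed by whether the $p+1$ element of $\PP^1(\F_p)$ module splits off a copy of $V$, which after reduction mod $\ell$ and using $\ell\nmid p$, $p\equiv$ something, reduces to the structure of $V_w(F)$ as $\SL_2(\F_\ell)$-module via the mod-$\ell$ reduction of $\G_0(M)$. \textbf{Step 4.} Invoke Proposition~\ref{P4}: $H^1(\SL_2(\F_\ell),V_w(F))=0$ if $\ell>w$ and $\ell\ne w+3$, and is one-dimensional if $\ell=w+3$. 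Conclude that $\ker\a=H^1(\SL_2(\F_\ell),V_w(F))$ (as both are computed by the same obstruction), giving the claimed dichotomy. The hard part is Step 3$'$/Step 4 — making precise the identification of $\ker\a$ with the finite group cohomology $H^1(\SL_2(\F_\ell),V_w(F))$, which requires carefully tracking the mod-$\ell$ degeneracy maps and the unipotent computations of Lemma~\ref{L3}, exactly as in Diamond's treatment of Ihara's lemma.
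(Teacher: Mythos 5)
Your proposal has the right intuition about the role of $H^1(\SL_2(\FF_\ell),V_w(F))$, but there is a genuine error in the way you set up the Mayer--Vietoris sequence, and it sends you off course. For an amalgam $G=A*_C B$ the sequence reads
\[
H^0(A,V)\oplus H^0(B,V)\rar H^0(C,V)\rar H^1(G,V)\rar H^1(A,V)\oplus H^1(B,V)\stackrel{\a}{\rar} H^1(C,V),
\]
so the kernel of $\a$ is the image of $H^1(G,V)$, not a quotient of $H^0(C,V)$. You wrote $\ker\a\cong\operatorname{coker}\big(H^0(A)\oplus H^0(B)\rar H^0(C)\big)$, which drops the crucial term $H^1(G,V)$; when the $H^0$'s all vanish (as you correctly compute), the right conclusion is $\ker\a\cong H^1(G,V)$, \emph{not} $\ker\a=0$. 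Having derived a false contradiction, you then abandon the tree/amalgam route in favor of a vaguely described degeneracy-map argument in Step~3$'$ that is not carried through.

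The paper actually does use the amalgam approach, with two adjustments you should absorb. First, it replaces $\G_0$ by the principal congruence groups: it restricts injectively (since $V^{\G(M)}=0$ by Lemma~L3) to $H^1(\G(M),V)\oplus H^1(\G(M)',V)$, reducing to the injectivity of the corresponding map $\g$. Second, the amalgam in Ihara's lemma~I1 is $\DD_M=\G(M)*_{\G(M)\cap\G(M)'}\G(M)'$, where $\DD_M$ is the \emph{principal congruence subgroup of level $M$ of $\PSL_2(\Z[1/p])$} --- this is the group $G$ whose $H^1$ you lost. Mayer--Vietoris then gives $\ker\g=H^1(\DD_M,V)$. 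The passage to $\PSL_2(\FF_\ell)$ is via the inflation--restriction sequence for $\DD_{\ell M}\subset\DD_M$ (with $\DD_M/\DD_{\ell M}\cong\PSL_2(\FF_\ell)$ by strong approximation), together with the Ihara ingredient~I2 (normal generation of $\DD_{\ell M}$ by conjugates of $\sm 1{\ell M}01$) and Lemma~L3(ii) to force the restriction map to vanish, yielding $H^1(\DD_M,V)\cong H^1(\PSL_2(\FF_\ell),V)$; Proposition~P4 then finishes. So your ``tree'' idea was the right starting point, but you need to keep the $H^1$ of the $S$-arithmetic group in the exact sequence and supply the inflation--restriction argument linking it to the finite group cohomology --- that linkage is the actual content, not the structure of $V$ as an $\SL_2(\FF_\ell)$-module per se.
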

\begin{proof}
Let $\G(M)$ be the principal congruence subgroup of level $M$ of
$\PSL_2(\Z)$, and $\G(M)':=\sm p001 ^{-1}
\G(M) \sm p001$. Their intersection is $\G(M)\cap \G_0(p)\subset \G_0(Mp)$, and 
we have a commutative diagram of restriction maps\footnote{The cohomology groups 
do not change upon replacing $\G_0(M)$ by its projectivization 
$\G_0(M)/ \{\pm 1\}$, as $-1$ acts trivially on $V.$}
\[\xymatrix{
H^1(\G_0(M), V)\oplus H^1(\G_0(M)', V) \ar[d]^{\res} \ar[r]^-{\a} & H^1(\G_0(Mp), V)
\ar[d]^{\res}\\
H^1(\G(M), V)\oplus H^1(\G(M)', V)\ar[r]^-{\g} & H^1(\G(M)\cap \G(M)',V) 
}\] 
The first vertical restriction is injective: using the inflation-restriction exact sequence
it is enough to show that $H^1\big(\G_0(M)/\G(M), V^{\G(M)}\big)=0$.
When $w>0$, the space of invariants  $V^{\G(M)}$ is trivial, since the invariants 
under $\sm 1M01$ are the constant polynomials, by Lemma~\ref{L3}, while the
only constant invariant under $\sm 10M1$ is 0. If $w=0$, we use that the quotient  
$\G_0(M)/\G(M)\simeq(\Z/M\Z)^*\ltimes(\Z/M\Z)$ has order $M\vp(M)$, which 
is coprime to $\ell$ by assumption, so the cohomology group vanishes (it is here
that we use the extra assumption $\ell\nmid \vp(M)$ when $w=0$). 

Therefore to show that $\a$ is injective it is enough to show that $\g$ is injective.
For that we use the following two ingredients of Ihara's lemma~\cite[Lemma 3.2]{Ih}. Let $\DD_M$ be the principal 
congruence subgroup of level $M$ of $\PSL_2(\Z[1/p])$.

\begin{enumerate}
   \item [(I1)] The group $\DD_M$ is the free product of $\G(M)$ and $\G(M)'$ with
amalgamated subgroup $\G(M)\cap \G(M)'$;
   \item [(I2)] The group $\DD_M$ is the normal closure of $\sm 1M01$ in $\DD_1$.     
\end{enumerate}

By (I1), the Mayer-Vietoris exact sequence gives:
\[
\cdots \rar H^1(\DD_M, V)\rar H^1(\G(M), V)\oplus
H^1(\G(M)', V)\ \stackrel{\g}{\longrightarrow} H^1(\G(M)\cap \G(M)',V)
\rar \cdots\,,
\]
so $\ker \g=H^1(\DD_M, V)$.
The inflation-restriction exact sequence for the normal 
subgroup $\DD_{\ell M}\subset \DD_M$ gives:
\[
0\rar H^1(\PSL_2(\FF_\ell), V^{\DD_{\ell
M}})\stackrel{\mathrm{inf}}{\longrightarrow} H^1(\DD_M, V)
\stackrel{\mathrm{res}}{\longrightarrow} H^1(\DD_{\ell M},
V)^{\DD_M/\DD_{\ell M}}.
\]
We have $V^{\DD_{\ell M}}=V$,
and we will see in Proposition~\ref{P4} below that 
$H^1(\PSL_2(\FF_\ell), V)$ vanishes under the assumption $\ell\ne w+3$. Therefore 
to show $H^1(\DD_M, V)=0$ it suffices to prove that the restriction map is 
identically 0. 

For this, we use (I2),  that is the fact that $\DD_{\ell M}$ is generated by elements of the
form $g \sm 1{\ell M} 01 g^{-1}$, $g \in \DD_1$. Let
$g \sm 1{\ell M} 01 g^{-1}=v^{\ell}$, with $v=g u g^{-1}\in \DD_M$ for 
$u=\sm 1M01$. For any cocycle  $\vp\in Z^1(\DD_M,V)$,   
we have  
\[\vp(v^{\ell})=\vp(v)|(1+v+\ldots
+v^{\ell-1})=(\vp(v)|g)|(1+u+\ldots +u^{\ell-1})|g^{-1}=0  \] 
by Lemma~\ref{L3} (ii) (here we use $\ell\ne w+1$). We conclude $\res \vp=0$.
\end{proof}

It remains to prove the vanishing of a finite cohomology group,
which was essentially proved in~\cite[Theorem 1.5.3]{KPS}. Since the proof
there was only sketched, and since we need a slightly more general
ground field, we fill in the details below.  
The entire structure of the cohomology ring is also determined in~\cite{Te}.
\begin{proposition}\label{P4} Let $w\ge 0$ be even, and let~$F$ be a field 
of characteristic~$\ell>w$, $\ell>3$. Then the cohomology 
group $H^1(\SL_2(\FF_\ell), V_w(F))$ vanishes,
unless $\ell=w+3$ when it is one-dimensional.
\end{proposition}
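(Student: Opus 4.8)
The plan is to reduce the computation, via a Sylow $\ell$-subgroup, to a weight computation for the diagonal torus. Let $U=\langle u\rangle$ with $u=\sm 1101$ be the unipotent radical of the standard Borel $B=\sm **0*$ of $G:=\SL_2(\FF_\ell)$; this is a Sylow $\ell$-subgroup, of order $\ell$, and $N_G(U)=B$ with $B/U\cong T$, the diagonal torus $T\cong\FF_\ell^\times$ of order $\ell-1$. Since $V_w(F)$ is $\ell$-torsion and $U$ is cyclic of prime order, the Cartan--Eilenberg description of $H^*(G,-)$ by stable elements of the cohomology of a Sylow subgroup (see e.g.~\cite[III.10]{Br}) applies, and because any two distinct conjugates of $U$ intersect trivially, stability in positive degree reduces to invariance under the natural action of $N_G(U)/U=T$. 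Thus $H^1(G,V_w(F))\cong H^1(U,V_w(F))^T$. (Alternatively, $[G:B]=\ell+1$ is invertible in $F$, so restriction already embeds $H^1(G,-)$ in $H^1(B,-)$, and the Lyndon--Hochschild--Serre sequence for $U\triangleleft B$ with $\ell\nmid|T|$ identifies $H^1(B,V_w(F))$ with $H^1(U,V_w(F))^T$; surjectivity of $H^1(G,-)\to H^1(U,-)^T$ again comes from the stable element argument.)

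Next I would compute $H^1(U,V_w(F))$ together with its $T$-action. A $1$-cocycle on the cyclic group $U$ is determined by its value $\varphi(u)$, subject to $(1+u+\cdots+u^{\ell-1})\varphi(u)=0$, and $B^1(U,V_w(F))=(u-1)V_w(F)$. By Lemma~\ref{L3}, applied with $a=1$, the operator $1+u+\cdots+u^{\ell-1}$ vanishes on $V_w(F)$ and $(u-1)V_w(F)=V_{w-1}(F)$, so $H^1(U,V_w(F))\cong V_w(F)/V_{w-1}(F)$ is one-dimensional, spanned by the class of the cocycle $\varphi_0$ with $\varphi_0(u)=X^w$.

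I would then pin down the character $\chi\colon T\to F^\times$ giving the $T$-action on this line. For $t=\sm t00{t^{-1}}$ one computes $t^{-1}ut=u^{a}$ with $a\equiv t^{-2}\pmod\ell$; the cocycle relation gives $\varphi_0(u^{a})=(1+u+\cdots+u^{a-1})\cdot X^w\equiv aX^w\pmod{V_{w-1}(F)}$; and in the left action on $V_w(F)$ the element $t$ scales $X^w$ by $t^{-w}$. Combining these, $(t\cdot\varphi_0)(u)\equiv t^{-(w+2)}X^w\pmod{V_{w-1}(F)}$, so $\chi(t)=t^{-(w+2)}$. Hence $H^1(U,V_w(F))^T$ is nonzero --- and then one-dimensional --- exactly when $\chi$ is trivial, i.e.\ when $(\ell-1)\mid(w+2)$. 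Under the hypotheses $\ell>w$ and $\ell>3$ we have $2\le w+2\le\ell+1$, and in this range $(\ell-1)\mid(w+2)$ holds precisely for $w+2=\ell-1$, i.e.\ $\ell=w+3$. This gives the stated dichotomy.

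The main obstacle is the weight computation of the third step: one must carefully track conventions --- the conversion of the right slash action on $V_w(F)$ to a left action for group cohomology, the twist $t^{-1}ut=u^{t^{-2}}$ in the $T$-action on cocycles, and the scaling $t\cdot X^w=t^{-w}X^w$ --- so as to arrive at the exponent $w+2$ rather than a nearby value; it is precisely this exponent that isolates the exceptional prime $\ell=w+3$. The reduction in the first step is routine once the stable element theorem is invoked, and the arithmetic in the last step is elementary.
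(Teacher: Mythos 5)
Your proof is correct and follows essentially the same approach as the paper: reduce to $H^1(U,V_w(F))^T$ via Cartan--Eilenberg stable elements, use Lemma~\ref{L3} to see that this cohomology is a line spanned by $\vp_0(u)=X^w$, and compute the $T$-weight to obtain the condition $(\ell-1)\mid(w+2)$, which under $\ell>w$, $\ell>3$ forces $\ell=w+3$. The paper routes through the Borel $B$ (checking stability via $H^n(T,V)=0$ for $n>0$, then inflation-restriction for $U\triangleleft B$) rather than invoking the trivial-intersection property of the Sylow $U$ directly, but this is a cosmetic organizational difference which you yourself note as an alternative.
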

\begin{proof} We set $\G=\SL_2(\FF_\ell)$, $V=V_w(F)$, and 
consider more generally $H^n(\G, V)$ for 
$n>0$. Let $B\subset \G$ be the Borel subgroup of upper triangular matrices,
so that $|B|=\ell(\ell-1)$. The composition $
\cor\circ \res: H^n(\G, V)\rar H^n(B, V) \rar H^n(\G, V)$
of the restriction and transfer maps equals multiplication by the index
$[\G:B]=\ell+1$~\cite[Ch. III, Prop. 9.5]{Br}, which is an isomorphism of  
vector spaces over $F$ as the index is coprime to $\ell$. The composition
$\res\circ \cor$ also equals multiplication by  $[\G:B]$, 
by the Cartan-Eilenberg stability criterion~\cite[Ch. XII Prop. 9.4]{CE}. To
apply the criterion, we have to check that 
$H^n(B, V)$ consists of stable cohomology classes, namely 
for all $x\in \G$ we have a commutative diagram
\[\xymatrix@C0.5em{
     H^n(B,V)\ar[rr]^-{\simeq} \ar[rd]_{\res} & & H^n(xBx^{-1},V)\ar[ld]^{\res} \\
     & H^n(B\cap xBx^{-1},V) &
     }
\]
with the horizontal isomorphism being conjugation by $x$. To prove the commutativity, note 
that $B\cap xBx^{-1}$ is either $B$ (when $x\in B$), or the diagonal subgroup $T$ (when $x\notin B$). 
In the first case the statement is trivial, while in the second it follows 
from the fact that $H^n(T, V)=0$, as $T$ is cyclic of order 
$\ell-1$ coprime to $\ell$. We conclude that $H^n(\G, V)\simeq H^n(B, V)$.

Let $U=\{\sm 1*01 \in \G \}$, which is normal in $B$ with $B/U\simeq T$, 
the diagonal subgroup. The  inflation-restriction exact sequence together with $H^n(T, V)=0$ for 
$n>0$ implies that 
\[ H^n(B, V)\simeq H^n(U, V)^{B/U}.\]

Since $U$ is cyclic generated by $u=\sm 1101$, its cohomology 
$H^n(U, V)$ equals $\ker N/\im(1-u)$ if $n$ is odd and $\ker(1-u)/\im N$
if $n>0$ is even \cite[p. 58]{Br}, where $N=1+u+\ldots+u^{\ell-1}: V\rightarrow V$ 
is the norm map. By Lemma~\ref{L3} we obtain 
$H^n(U, V)\simeq F$ for all $n\ge 0$ if $\ell\ne w+1$, and $H^n(U, V)=0$ if $\ell=w+1$ 
(see Remark~\ref{r4.5}). 

To compute the invariants under $B/U$ assume $n=1$, and let $\vp:U\rar V$ be 
the generator of $H^1(U, V)$ with $\vp(u)=X^w\notin \im(1-u)$.  
The group $B/U\simeq T$ acts on cocycles by 
$\vp|g (n)=\vp(gn g^{-1})|_{-w}g$ for $g\in B, n\in U$, so the class of~$\vp$ 
is invariant under $T$ if and only if 
$\vp(u)-\vp(t u t^{-1})|_{-w} t \in \im(1-u)=V_{w-1}$ for 
$t=\sm a00{a^{-1}}\in T$ (i.e. $\vp-\vp|t$ is a coboundary). Since $\vp(u)=X^w$,
this happens if and only if $a^{w+2}=1$ for all $a\in \FF_\ell^*$, 
i.e., if and only if $\ell-1|w+2$. Since $\ell>w$, $\ell\ge 5$, we conclude 
that $H^1(U, V)^{B/U}=0$ unless $\ell=w+3$, when it is one dimensional.
\end{proof}

\section{Numerical examples}\label{S5}

For the examples in this section, we computed spaces of period polynomials
and individual eigenforms using the software MAGMA~\cite{MGM}. More details 
are given in~\cite[Sec. 5.5]{PP}.

\begin{example} \label{ex1} We expect Theorem~\ref{T1} to hold for $\ell=k+1$ as well,
and we verified the congruence in many such cases. For example, let 
 $p=19$, $k=6$, $\ell=k+1=7$. Although $\ell=k+1$ divides once the denominator of 
$B_k$ (which always holds when $k+1$ is prime by the von Staudt-Clausen Theorem),
we have $\ell^3|p^{k/2}+1$, so $\ell$ divides the numerator of 
$\frac{B_k}{2k}(p^{k/2}+1)$. We find a congruence modulo $\ell$ between 
$E_{k,p}^{(+1)}$ and the newform $f\in S_k^{(+1)}(p)$ with $q$-expansion:
\[f=q-2q^2-q^3-28 q^4-24q^5+2q^6-167q^7+\ldots-361 q^{19}+\ldots\,.
\]
Note that in order to prove this congruence, and the ones below, it is enough to check that it holds 
for coefficients of prime index up to the Sturm bound $k(p+1)/12$.
\end{example}
\comment{
\begin{example} Let $p=47$, $k=24$, $\ell=103$. We have 
$\ell|p^{k/2}-1$, but also $\ell| \frac{B_k}{2k}$, so this case is not covered 
by~\cite{DF}. Theorem~\ref{T1} gives a congruence between $E_{k,p}^{(-1)}$ and 
a newform $f\in S_k^{(-1)}(p)$. 
The newforms in $S_k^{(-1)}(p)$ are Galois conjugate over a field
of degree 41, and we verified the congruence by showing that the norms of the 
difference between Fourier coefficients are divisible by $\ell$. We also checked 
that the 
subspace of $W_{k-2}^+(p)^{\new}_{/ \FF_\ell}$ on which the Atkin-Lehner involution
acts as $-1$ has expected dimension 41, confirming the surjectivity in 
Theorem~\ref{T2}. 
\end{example}
}
\begin{example}\label{ex2}
One may ask whether the congruence in Theorem~\ref{T1}
comes from a congruence between the period polynomials 
$P^+(f)$ and $P^+(E_{k,p}^{(\e)})$ of the forms in the theorem (both normalized to
have constant term 1 at the identity coset). Such a congruence would imply the 
congruence of Hecke eigenvalues, and for level 1 it was shown to hold by 
Manin~\cite{Ma} in the cases when $S_k(1)$ is one dimensional. 
In higher level, this congruence often holds (see the next example), 
but the following example shows that it can also fail. For $p=5$, $k=40$, 
$\ell=71$, $\e=-1$, we have 
$\ell|p^{k/2}+\e$ and the congruence~\eqref{e1} holds for some $f\in S_k^{(\e)}(p)$, 
but $P^+(f)(I)\not\equiv P^+\big(E_{k,p}^{(\e)}\big)(I) \pmod \I$. This 
illustrates the fact that the Deligne-Serre lifting lemma guarantees the 
lift of systems of eigenvalues, but not of eigenvectors. 
\end{example}
\begin{example}\label{ex3} Let $M=7$, $k=6$, and let $g\in S_k(M)$ be the 
newform  
\[ g=  q - 10 q^2 - 14 q^3 + 68 q^4 - 56 q^5 + 140 q^6 - 49 q^7 +\ldots 
+1824q^{23}+\ldots\,. \] 
The cosets $A=\G_0(M)\sm **xy$ can be identified with points $(x:y)\in 
\PP^1(\Z/M\Z)$, and setting $P=P^+(g)$, we can use the relations 
$P|\dd=P$, $P|S=-P$ to express all 8 components of $P$ in terms of 
\[\begin{aligned}
  P((0:1))=-49X^4 + 1, \quad & 
  P((1:2))=80X^4 - \frac{43}2 X^3 - \frac{129}2 X^2 - 86X + 6, \\
  P((1:1))=-49X^4 + 49, \quad & 
  P((1:3))=-6X^4 - 86X^3 + \frac{129}2 X^2 - \frac{43}2 X - 80.
   \end{aligned}
\]
Since 43 divides all the coefficients except for those of $X^0$ and $X^4$, this 
illustrates the previous comment, namely we have 
$P^+(g)\equiv P^+\big(E_{k,M}^{(+1)}\big)\!\!\pmod{43}$.
This implies directly that $g\equiv E_{k,M}^{(+1)} \pmod{43}$, and  
indeed we have $43|7^3+1$, so the congruence follows from Theorem~\ref{T1}. 

To apply Theorem~\ref{T3}, we note that $\den P^+(g)=2$ in~\eqref{1.5}, 
so this condition poses no restriction. 
Taking $p=2$, we find $11|\lambda_p-p^{k/2-1}(p+1)$, and since 
$11\nmid p^{k/2-1}-1=3$ we deduce from Theorem~\ref{T3} that there exists 
a congruence between $g_p^{(-1)}$  
and a Hecke eigenform $f\in S_6(14)$ which is 2-new and has eigenvalue $-1$ for $W_2$. 
In fact, we find a newform~$f$ of level 14
(as predicted by the conjecture in the introduction)  with $q$-expansion 
\[f=q + 4q^2 + 8q^3 + 16q^4 + 10q^5 + 32q^6 - 49q^7+\ldots +2000q^{23}+\ldots \,. \]
\end{example}

\end{document}